\numberwithin{equation}{section}
\newcommand{\calI}{\mathcal{I}}
\newcommand{\calL}{\mathcal{L}}
\newcommand{\mA}{\mathbb{A}}
\newcommand{\mC}{\mathbb{C}}
\newcommand{\mD}{\mathbb{D}}
\newcommand{\mF}{\mathbb{F}}
\newcommand{\mN}{\mathbb{N}}
\newcommand{\mR}{\mathbb{R}}
\newcommand{\mS}{\mathbb{S}}
\newcommand{\mT}{\mathbb{T}}
\newcommand{\mZ}{\mathbb{Z}}
\newcommand{\inv}{{\textrm{inv }}}
\newcommand{\nm}{\,\rule[-.6ex]{.13em}{2.3ex}\,}
\newtheorem{theorem}{Theorem}[section]
\newtheorem{lemma}[theorem]{Lemma}
\newtheorem{corollary}[theorem]{Corollary}
\newtheorem{proposition}[theorem]{Proposition}
\theoremstyle{definition}
\newtheorem{remark}[theorem]{Remark}
\theoremstyle{definition}
\newtheorem{definition}[theorem]{Definition}
\theoremstyle{definition}
\newtheorem{notation}[theorem]{Notation}
\begin{document}

\keywords{$\nu$-metric, robust control, Hardy algebra}

\subjclass{Primary 93B36; Secondary 93D15, 46J15}

\title[A new $\nu$-metric when $R=H^\infty$]{Extension of the
  $\nu$-metric for stabilizable plants over $H^\infty$}

\author{Amol Sasane}
\address{Department of Mathematics, London School of Economics,
    Houghton Street, London WC2A 2AE, United Kingdom.}
\email{a.j.sasane@lse.ac.uk}

\begin{abstract}
  An abstract $\nu$-metric was introduced in \cite{BalSas}, with a
  view towards extending the classical $\nu$-metric of Vinnicombe from
  the case of rational transfer functions to more general nonrational
  transfer function classes of infinite-dimensional linear control
  systems. Here we give an important concrete special
  instance of the abstract $\nu$-metric, namely the case when the
  ring of stable transfer functions is the Hardy algebra $H^\infty$,
  by verifying that all the assumptions demanded in the abstract
  set-up are satisfied. This settles the open question implicit
  in \cite{BalSas2}.
\end{abstract}

\maketitle

\section{Introduction}

We recall the general {\em stabilization problem} in control theory.
Suppose that $R$ is a commutative integral domain with identity
(thought of as the class of stable transfer functions) and let
$\mF(R)$ denote the field of fractions of $R$. The stabilization
problem is: Given $P\in (\mF(R))^{p\times
  m}$ (an unstable plant transfer function),
find $C \in (\mF(R))^{m\times p}$ (a stabilizing controller
  transfer function), such that 
$$
H(P,C):= \left[\begin{array}{cc} P \\ I \end{array} \right]
(I-CP)^{-1} \left[\begin{array}{cc} -C & I \end{array} \right]
 \in R^{(p+m)\times (p+m)} \textrm{ (is stable).}
$$
In the {\em robust stabilization problem}, one goes a step
further.  One knows that the plant is just an approximation of
reality, and so one would really like the controller $C$ to not only
stabilize the {\em nominal} plant $P_0$, but also all sufficiently
close plants $P$ to $P_0$.  The question of what one means by
``closeness'' of plants thus arises naturally. So one needs a
function $d$ defined on pairs of stabilizable plants such that
\begin{enumerate}
\item $d$ is a metric on the set of all stabilizable plants,
\item $d$ is amenable to computation, and
\item stabilizability is a robust property of the plant with respect
  to this metric (that is, whenever a plant $P_0$ is stabilized by
  a controller $C$, then there is a small enough neighbourhood of the plant $P_0$
  consisting of plants which are stabilized by the same controller $C$).
\end{enumerate}
Such a desirable metric, was introduced by Glenn Vinnicombe in
\cite{Vin} and is called the $\nu$-{\em metric}. In that paper,
essentially $R$ was taken to be the rational functions without poles
in the closed unit disk or, more generally, the disk algebra, and the
most important results were that the $\nu$-metric is indeed a metric
on the set of stabilizable plants, and moreover, one has the
inequality that if $P_0,P\in \mS(R,p,m)$, then
$$
\mu_{P,C} \geq \mu_{P_0,C}-d_{\nu}(P_0,P),
$$
where $\mu_{P,C}$ denotes the {\em stability margin} of the pair
$(P,C)$, defined by
$$
\mu_{P,C}:=\|H(P,C)\|_\infty^{-1}.
$$
This implies in particular that stabilizability is a robust property
of the plant.

The problem of what happens when $R$ is some other ring of stable
transfer functions of infinite-dimensional systems was left open in
\cite{Vin}. This problem of extending the $\nu$-metric from the
rational case to transfer function classes of infinite-dimensional
systems was addressed in \cite{BalSas}. There the starting point in
the approach was abstract. It was assumed that $R$ is any commutative
integral domain with identity which is a subset of a Banach algebra
$S$ satisfying certain assumptions, labeled (A1)-(A4), which are
recalled in Section~\ref{section_abstract_nu_metric}.  Then an
``abstract'' $\nu$-metric was defined in this setup, and it was shown
in \cite{BalSas} that it does define a metric on the class of all
stabilizable plants.  It was also shown there that stabilizability is
a robust property of the plant.

In \cite{Vin}, it was suggested that the $\nu$-metric in the case when
$R=H^\infty$ might be defined as follows. (Here $H^\infty$ denotes the
algebra of bounded and holomorphic functions in the unit disk $\{z\in
\mC:|z|<1\}$.) Let $P_1, P_2$ be unstable plants with the normalized
left/right coprime factorizations
\begin{eqnarray*}
P_1&=& N_{1} D_{1}^{-1}= \widetilde{D}_{1}^{-1} \widetilde{N}_{1},\\
P_2&=& N_{2} D_{2}^{-1}= \widetilde{D}_{2}^{-1} \widetilde{N}_{2},
\end{eqnarray*}
where $N_1, D_1, N_2, D_2, \widetilde{N}_{1},
\widetilde{D}_{1},\widetilde{N}_{2}, \widetilde{D}_{2}$ are matrices
with $H^\infty$ entries. Then
$$
d_{\nu} (P_1,P_2 )=\left\{\begin{array}{ll}
\|\widetilde{G}_2 G_1\|_\infty & \textrm{if }
 T_{G_1^* G_2} \textrm{ is Fredholm with Fredholm index } 0,\\
  1 & \textrm{otherwise}.
\end{array}
\right.
$$
Here $G_{k},
\widetilde G_{k}$ arise from $P_{k}$ ($k = 1, 2$) according to the
notational conventions given in Subsection~\ref{subsec5} below, and $\cdot^\ast$ has the usual meaning, namely: $G_1^*(\zeta)$ is the
transpose of the matrix whose entries are complex conjugates of the
entries of the matrix $G_1(\zeta)$, for $\zeta \in \mT$. Also
in the above, for a matrix $M\in (L^\infty)^{p\times m}$, $T_M$
denotes the {\em Toeplitz operator} from $(H^2)^m$ to $(H^2)^p$, given
by
$$
T_M \varphi=\textrm{P}_{(H^2)^p}(M\varphi) \quad (\varphi \in (H^2)^m)
$$
where $M\varphi$ is considered as an element of $(L^2)^p$ and
$\textrm{P}_{(H^2)^p}$ denotes the canonical orthogonal projection from
$(L^2)^p$ onto $(H^2)^p$.

In \cite{BalSas2}, we showed that the above does work for the
case when $R$ is the smaller class $QA$ of quasianalytic functions
in the unit disk. We proved this by showing that this case is
just a special instance of the abstract $\nu$-metric introduced in
\cite{BalSas}. A perusal of the extensive literature on Fredholm
theory of Toeplitz operators from the 1970s lead to this choice of
$R=QA$ and $S=QC$ (the class of quasicontinuous functions)
as conceivably the most general subalgebras of $H^\infty$
and $L^\infty$ which fit the setup of \cite{BalSas}.

In this article, we use a different idea to tackle the problem of
defining a new metric in the case when $R=H^\infty$. We first notice
that when $R$ is the disk algebra $A(\mD)$, then there is no problem
in defining the $\nu$-metric; see \cite[\S5.1]{BalSas}. We then handle
the $H^\infty$ case by using the observation that the restrictions of a function
$f\in H^\infty$ to the smaller disks with radii $r<1$ give rise to
elements in the disk algebra by dilating these restrictions to bigger
disks of radius $1$. In other words, $f_r$ defined via
$$
f_r(z)= f(r z)\quad (z\in \mD).
$$
are all elements of $A(\mD)$. We then use these restrictions in a suitable
manner to define the $\nu$-metric.

The paper is organized as follows:
\begin{enumerate}
\item In Section~\ref{section_abstract_nu_metric}, we recall the
  general setup with the assumptions and the abstract metric $d_\nu$ from
  \cite{BalSas}.
\item In Section~\ref{section_Hinfty}, we specialize $R$ to a concrete
  ring of stable transfer functions, namely $R=H^\infty$, and show
  that our abstract assumptions hold in this particular case.
  Moreover in the Subsection~\ref{subsection_genuine_extension}, we will
  show that when our extended $\nu$-metric is restricted to rational
  plants, we obtain the classical $\nu$-metric, hence showing that we have
  obtained a genuine extension.
\end{enumerate}

\section{Recap of the abstract $\nu$-metric}
\label{section_abstract_nu_metric}

\noindent We recall the setup from \cite{BalSas}:
\begin{itemize}
\item[(A1)] $R$ is commutative integral domain with identity.
\item[(A2)] $S$ is a unital commutative complex semisimple Banach
  algebra with an involution $\cdot^*$, such that $R \subset S$.  We
  use $\inv S$ to denote the invertible elements of $S$.
\item[(A3)] There exists a map $\iota: \inv S \rightarrow G$, where
  $(G,+)$ is an Abelian group with identity denoted by $\circ$, and
  $\iota$ satisfies
\begin{itemize}
\item[(I1)] $\iota(ab)= \iota (a) +\iota(b)$ ($a,b \in \inv S$).
\item[(I2)] $\iota(a^*)=-\iota(a)$ ($a\in \inv S$).
\item[(I3)] $\iota$ is locally constant, that is, $\iota$ is continuous
  when $G$ is equipped with the discrete topology.
\end{itemize}
\item[(A4)] $x\in R \cap (\inv S)$ is invertible as an element of $R$
  if and only if $\iota(x)=\circ$.
\end{itemize}

\medskip

\noindent We recall the following standard definitions from the
factorization approach to control theory.

\subsection{The notation $\mF(R)$:}\label{subsec1} $\mF(R)$ denotes
the field of fractions of $R$.

\subsection{The notation $F^*$:} If $F\in R^{p\times m}$, then $F^*\in
S^{m\times p}$ is the matrix with the entry in the $i$th row and $j$th
column given by $F_{ji}^*$, for all $1\leq i\leq p$, and all $ 1\leq j
\leq m$.

\subsection{Right coprime/normalized coprime factorization:} For
 a matrix $P \in (\mF(R))^{p\times m}$, a factorization $P=ND^{-1}$,
where $N,D$ are matrices with entries from $R$, is called a {\em right
  coprime factorization of} $P$ if there exist matrices $X, Y$ with
entries from $R$ such that $ X N + Y D=I_m$.  If moreover
$ N^{*} N +D^{*} D =I_m$, then the right coprime factorization is
referred to as a {\em normalized} right coprime factorization of $P$.

\subsection{Left coprime/normalized coprime factorization:} For
 a matrix $P \in (\mF(R))^{p\times m}$, a factorization
$P=\widetilde{D}^{-1}\widetilde{N}$, where $\widetilde{N},\widetilde{D}$
are matrices with entries from $R$, is
called a {\em left coprime factorization of} $P$ if there exist
matrices $\widetilde{X}, \widetilde{Y}$ with entries from $R$ such
that $ \widetilde{N} \widetilde{X}+\widetilde{D} \widetilde{Y}=I_p.  $
If moreover $ \widetilde{N} \widetilde{N}^{*}
+\widetilde{D}\widetilde{D}^{*}=I_p, $ then the left coprime
factorization is referred to as a {\em normalized} left coprime
factorization of $P$.

\subsection{The notation $G, \widetilde{G}, K,\widetilde{K}$:}
\label{subsec5}
Given $P \in (\mF(R))^{p\times m}$ with normalized right and left
factorizations $P=N D^{-1}$ and $P= \widetilde{D}^{-1} \widetilde{N}$,
respectively, we introduce the following matrices with entries from
$R$:
$$
G=\left[ \begin{array}{cc} N \\ D \end{array} \right] \quad
\textrm{and} \quad \widetilde{G}=\left[ \begin{array}{cc}
    -\widetilde{D} & \widetilde{N} \end{array} \right] .
$$
Similarly, given a $C \in (\mF(R))^{m\times p}$ with normalized right
and left factorizations $C=N_C D_C^{-1}$ and $C= \widetilde{D}_C^{-1}
\widetilde{N}_C$, respectively, we introduce the following matrices
with entries from $R$:
$$
K=\left[ \begin{array}{cc} D_C \\ N_C \end{array} \right] \quad
\textrm{and} \quad \widetilde{K}=\left[ \begin{array}{cc}
    -\widetilde{N}_C & \widetilde{D}_C \end{array} \right] .
$$

\subsection{The notation $\mS(R,p, m)$:}\label{subsec6}
$\mS(R,p, m)$ denotes the set of all elements $P\in
(\mF(R))^{p\times m}$ that possess a normalized right coprime
factorization and a normalized left coprime factorization.

\medskip

We now recall the definition of the metric $d_\nu$ on $\mS(R, p, m)$.
But first we specify the norm we use for matrices with entries from
$S$.

\begin{definition}[$\|\cdot \|_{S,\infty}$]\label{def_sup_norm}
Let $\mathfrak{M}$ denote the maximal ideal space of the Banach
algebra $S$.  For a matrix $M \in S^{p\times m}$, we set
\begin{equation}
\label{norm}
\|M\|_{S,\infty}= \max_{\varphi \in \mathfrak{M}} \nm {\mathbf M}(\varphi) \nm,
\end{equation}
and refer to it as the {\em Gelfand norm}.
Here ${\mathbf M}$ denotes the entry-wise Gelfand transform of $M$,
and $\nm \cdot \nm$ denotes the induced operator norm from $\mC^{m}$
to $\mC^{p}$. For the sake of concreteness, we fix the standard
Euclidean norms on the vector spaces $\mC^{m}$ to $\mC^{p}$.
\end{definition}

The maximum in \eqref{norm} exists since $\mathfrak{M}$ is a compact
space when it is equipped with Gelfand topology, that is, the
weak-$\ast$ topology induced from $\calL( S; \mC)$. Since we have
assumed $S$ to be semisimple, the Gelfand transform
$$
\widehat{\;\cdot\;}:S \rightarrow\widehat{S}\; ( \subset
C(\mathfrak{M},\mC))
$$
is an injective map. If $M\in S^{1\times 1}=S$, then we note that there
are two norms available for $M$: the one as we have defined above,
namely $\|M\|_{S,\infty}$, and the norm $\|M\|_{S}$ of $M$ as an
element of the Banach algebra $S$. But throughout this article, we
will use the norm given by \eqref{norm}.

\begin{definition}[Abstract $\nu$-metric $d_\nu$]
\label{def_nu_metric}
For $P_1, P_2 \in \mS(R,p,m)$, with the normalized left/right
coprime factorizations
\begin{eqnarray*}
P_1&=& N_{1} D_{1}^{-1}= \widetilde{D}_{1}^{-1} \widetilde{N}_{1},\\
P_2&=& N_{2} D_{2}^{-1}= \widetilde{D}_{2}^{-1} \widetilde{N}_{2},
\end{eqnarray*}
we define
\begin{equation}
\label{eq_nu_metric}
d_{\nu} (P_1,P_2 ):=\left\{
\begin{array}{ll}
  \|\widetilde{G}_{2} G_{1}\|_{S,\infty} &
  \textrm{if } \det(G_1^* G_2) \in \inv S \textrm{ and }
  \\ 
 &\phantom{\textrm{if }\;}  \iota (\det (G_1^* G_2))=\circ, \\
  1 & \textrm{otherwise},
\end{array}\right.
\end{equation}
where the notation is as in Subsections~\ref{subsec1}-\ref{subsec6}.
\end{definition}

The following was proved in \cite{BalSas}:

\begin{theorem}
\label{thm_d_nu_is_a_metric}
$d_\nu$ given by \eqref{eq_nu_metric} is a metric on $\mS(R, p, m)$.
\end{theorem}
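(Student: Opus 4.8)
The plan is to reduce every assertion to a pointwise statement on the maximal ideal space $\mathfrak{M}$ via the (entrywise) Gelfand transform, and then to verify the four metric axioms. For $P_i\in\mS(R,p,m)$ write $\mathbf{G}_i,\widetilde{\mathbf{G}}_i$ for the Gelfand transforms and set $\calR_i(\varphi)=\mathrm{ran}\,\mathbf{G}_i(\varphi)\subset\mC^{p+m}$. The normalizations $G_i^*G_i=I_m$, $\widetilde G_i\widetilde G_i^*=I_p$, $\widetilde G_iG_i=0$ say that at each $\varphi$ the columns of $\mathbf G_i(\varphi)$ are orthonormal, $\widetilde{\mathbf G}_i(\varphi)^*$ is an isometry onto $\calR_i(\varphi)^\perp$, and $\mathbf U_i(\varphi):=[\,\mathbf G_i(\varphi)\ \widetilde{\mathbf G}_i(\varphi)^*\,]$ is unitary. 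A preliminary point is well-definedness: a normalized coprime factorization is unique up to multiplication by a matrix $U$ over $R$ that is invertible over $R$ and $R$-unitary; such $U$ is pointwise unitary, hence leaves $\|\cdot\|_{S,\infty}$ invariant, and by (A4) satisfies $\iota(\det U)=\circ$, so by (I1) it leaves the condition in \eqref{eq_nu_metric} unchanged. Nonnegativity is clear, and $d_\nu\le1$ always, since $\widetilde{\mathbf G}_j(\varphi)\mathbf G_i(\varphi)$ is an off-diagonal block of the unitary $\mathbf U_j(\varphi)^*\mathbf U_i(\varphi)$. For the identity of indiscernibles, $d_\nu(P,P)=0$ because $\widetilde G_1G_1=0$; conversely $d_\nu(P_1,P_2)=0$ forces the first branch with $\widetilde G_2G_1=0$, i.e. $\widetilde D_2N_1=\widetilde N_2D_1$, whence $P_1=N_1D_1^{-1}=\widetilde D_2^{-1}\widetilde N_2=P_2$.

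For symmetry I would work at each $\varphi$ with the CS-decomposition of $\mathbf U_j(\varphi)^*\mathbf U_i(\varphi)$: its two off-diagonal blocks $\widetilde{\mathbf G}_j\mathbf G_i$ and $\mathbf G_j^*\widetilde{\mathbf G}_i^*=(\widetilde{\mathbf G}_i\mathbf G_j)^*$ share the same nonzero singular values, so $\|\widetilde{\mathbf G}_j(\varphi)\mathbf G_i(\varphi)\|=\|\widetilde{\mathbf G}_i(\varphi)\mathbf G_j(\varphi)\|$, and taking the maximum over $\varphi$ gives $\|\widetilde G_jG_i\|_{S,\infty}=\|\widetilde G_iG_j\|_{S,\infty}$. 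The side-condition is symmetric too: as $S$ is commutative, $\det(G_2^*G_1)=(\det(G_1^*G_2))^*$, the involution preserves $\inv S$, and by (I2) $\iota((\det(G_1^*G_2))^*)=-\iota(\det(G_1^*G_2))$, so the first branch is entered for $(P_1,P_2)$ iff it is for $(P_2,P_1)$.

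The triangle inequality is the substantial part. The first ingredient is the geometric identity $\|\widetilde{\mathbf G}_j(\varphi)\mathbf G_i(\varphi)\|=\|P_{\calR_i(\varphi)}-P_{\calR_j(\varphi)}\|=\sin\theta_{ij}(\varphi)$, where $\theta_{ij}(\varphi)$ is the largest principal angle between the equidimensional subspaces $\calR_i(\varphi),\calR_j(\varphi)$ (this uses that $\widetilde{\mathbf G}_j^*$ is an isometry onto $\calR_j^\perp$). Since $M\mapsto P_M$ makes $\|P_{\calR_i}-P_{\calR_j}\|$ a genuine metric on the Grassmannian, the pointwise triangle inequality holds, and taking the maximum over $\varphi$ yields $\|\widetilde G_3G_1\|_{S,\infty}\le\|\widetilde G_2G_1\|_{S,\infty}+\|\widetilde G_3G_2\|_{S,\infty}$ whenever all three pairs are in the first branch. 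It remains to control the branch logic. If $d_\nu(P_1,P_2)+d_\nu(P_2,P_3)\ge1$ the claim is trivial because $d_\nu\le1$. Otherwise both summands are $<1$, so both pairs lie in the first branch with sub-unit value, and at every $\varphi$ one has $\sin\theta_{12}(\varphi)+\sin\theta_{23}(\varphi)\le d_\nu(P_1,P_2)+d_\nu(P_2,P_3)<1$; the projection-metric triangle inequality then gives $\|\widetilde G_3G_1\|_{S,\infty}<1$, so $\det(G_1^*G_3)$ has nonvanishing Gelfand transform and hence $\det(G_1^*G_3)\in\inv S$.

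The remaining, and hardest, step is the index condition $\iota(\det(G_1^*G_3))=\circ$, which is what genuinely places $(P_1,P_3)$ in the first branch. Here I would run a homotopy and invoke local constancy (I3). Set
\[
M(t)=(1-t)\,(G_1^*G_2)(G_2^*G_3)+t\,(G_1^*G_3)\in S^{m\times m}\qquad(t\in[0,1]),
\]
so $M(0)=(G_1^*G_2)(G_2^*G_3)$ and $M(1)=G_1^*G_3$. With $\mathbf E=\mathbf G_1^*(I-\mathbf G_2\mathbf G_2^*)\mathbf G_3=(\mathbf G_1^*\widetilde{\mathbf G}_2^*)(\widetilde{\mathbf G}_2\mathbf G_3)$ one has $\mathbf M(t)(\varphi)=\mathbf G_1^*\mathbf G_2\mathbf G_2^*\mathbf G_3+t\mathbf E$, and the bounds $\sigma_{\min}(\mathbf G_1^*\mathbf G_2\,\mathbf G_2^*\mathbf G_3)\ge\cos\theta_{12}\cos\theta_{23}$ and $\|\mathbf E\|\le\sin\theta_{12}\sin\theta_{23}$ (together with $t\le1$) give
\[
\sigma_{\min}(\mathbf M(t)(\varphi))\ \ge\ \cos\theta_{12}(\varphi)\cos\theta_{23}(\varphi)-\sin\theta_{12}(\varphi)\sin\theta_{23}(\varphi)=\cos\big(\theta_{12}(\varphi)+\theta_{23}(\varphi)\big).
\]
Because $\sin\theta_{12}(\varphi)+\sin\theta_{23}(\varphi)<1$ implies $\sin^2\theta_{12}+\sin^2\theta_{23}<1$ and hence $\theta_{12}(\varphi)+\theta_{23}(\varphi)<\pi/2$, this lower bound is strictly positive, uniformly in $t$ and $\varphi$. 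Thus $t\mapsto\det M(t)$ is a continuous path in $S$ with nowhere-vanishing Gelfand transform, i.e. a path in $\inv S$, so by (I3) $\iota(\det M(0))=\iota(\det M(1))$. Finally $\det M(0)=\det(G_1^*G_2)\det(G_2^*G_3)$, and by (I1) together with the hypotheses $\iota(\det(G_1^*G_2))=\iota(\det(G_2^*G_3))=\circ$ we get $\iota(\det(G_1^*G_3))=\iota(\det M(1))=\circ$. This establishes first-branch membership of $(P_1,P_3)$, and the norm triangle inequality then yields $d_\nu(P_1,P_3)\le d_\nu(P_1,P_2)+d_\nu(P_2,P_3)$. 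The main obstacle is precisely this index step: constructing a homotopy that stays inside $\inv S$, where the uniform positivity of $\sigma_{\min}(\mathbf M(t)(\varphi))$ secured by the sub-unit hypothesis is essential, after which (I1)–(I3) and (A4) carry out the bookkeeping.
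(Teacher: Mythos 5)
The paper does not actually prove Theorem~\ref{thm_d_nu_is_a_metric}; it is quoted from \cite{BalSas}, so there is no in-paper proof to compare against. Your argument is correct and follows essentially the same route as the cited proof: reduce everything to pointwise statements on $\mathfrak{M}$ via the Gelfand transform (which tacitly uses that the involution of (A2) corresponds to complex conjugation under the transform, as it does in all instances considered), identify $\|\widetilde{G}_2G_1\|_{S,\infty}$ pointwise with the gap between the ranges of $\mathbf{G}_1(\varphi)$ and $\mathbf{G}_2(\varphi)$ to get the norm triangle inequality, and handle the index condition for $(P_1,P_3)$ by the homotopy from $(G_1^*G_2)(G_2^*G_3)$ to $G_1^*G_3$ inside $\inv S$ together with (I1) and the local constancy (I3).
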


\begin{definition}
Given $P \in (\mF(R))^{p\times m}$ and $C\in (\mF(R))^{m\times p}$,
the {\em stability margin} of the pair $(P,C)$ is defined by
$$
\mu_{P,C}=
\left\{ \begin{array}{ll}
\|H(P,C)\|_{S,\infty}^{-1} &\textrm{if }P \textrm{ is stabilized by }C,\\
0 & \textrm{otherwise.}
\end{array}\right.
$$
\end{definition}

The number $\mu_{P,C}$ can be interpreted as a measure of the
performance of the closed loop system comprising $P$ and $C$: larger
values of $\mu_{P,C}$ correspond to better performance, with
$\mu_{P,C}>0$ if and only if $C$ stabilizes $P$.

The following was proved in \cite{BalSas}:

\begin{theorem}
  If $P_0,P\in \mS(R,p,m)$ and $C\in \mS(R, m,p)$, then $$\mu_{P,C}
  \geq \mu_{P_0,C}-d_{\nu}(P_0,P).$$
\end{theorem}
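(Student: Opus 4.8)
The plan is to reduce the inequality to a pointwise estimate on the maximal ideal space $\mathfrak{M}$ of $S$, together with a separate topological (index) argument that upgrades pointwise invertibility to invertibility over $R$. Throughout I write $G_1,\widetilde{G}_1$ for the normalized graph symbols of $P_0$ and $G_2,\widetilde{G}_2$ for those of $P$, matching the indexing in Definition~\ref{def_nu_metric}, and $\widetilde{K}$ for the left symbol of $C$ from Subsection~\ref{subsec5}. The starting point is the standard identity of the factorization approach, $H(P,C)=G_2(\widetilde{K}G_2)^{-1}\widetilde{K}$, valid whenever $\widetilde{K}G_2$ is invertible over $R$. Because the normalization conditions, read through the Gelfand transform, make $\mathbf{G}_2(\varphi)$ an isometry and $\widetilde{\mathbf{K}}(\varphi)$ a coisometry at every $\varphi\in\mathfrak{M}$, one gets $\|H(P,C)\|_{S,\infty}=\sup_{\varphi}\|(\widetilde{\mathbf{K}}\mathbf{G}_2)^{-1}(\varphi)\|$, and hence
\[
\mu_{P,C}=\inf_{\varphi\in\mathfrak{M}}\sigma_{\min}\big((\widetilde{\mathbf{K}}\mathbf{G}_2)(\varphi)\big),
\]
where $\sigma_{\min}$ is the smallest singular value; the analogous formula holds for $\mu_{P_0,C}$ with $G_1$. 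I would first record that $\mu_{P_0,C}\le 1$. If $d_{\nu}(P_0,P)\ge\mu_{P_0,C}$ the right-hand side is $\le 0\le\mu_{P,C}$ and there is nothing to prove; this already covers $d_\nu(P_0,P)=1$ and the case where $C$ fails to stabilize $P_0$. So I may assume $d_\nu(P_0,P)<\mu_{P_0,C}\le 1$, which places us in the first branch of \eqref{eq_nu_metric}: $\det(G_1^*G_2)\in\inv S$, $\iota(\det(G_1^*G_2))=\circ$, and $d_\nu(P_0,P)=\|\widetilde{G}_2G_1\|_{S,\infty}$; moreover $\mu_{P_0,C}>0$, so by (A4) $\widetilde{K}G_1$ is invertible over $R$ and $\iota(\det(\widetilde{K}G_1))=\circ$.

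The analytic core is a pointwise inequality. Fixing $\varphi$ and suppressing it, the orthogonal decomposition $G_1=G_2(G_2^*G_1)+\widetilde{G}_2^{\,*}(\widetilde{G}_2G_1)$ (valid since $\widetilde{G}_2^{\,*}\widetilde{G}_2=I-G_2G_2^*$), multiplied on the left by $\widetilde{K}$, gives
\[
(\widetilde{K}G_2)(G_2^*G_1)=\widetilde{K}G_1-\widetilde{K}\widetilde{G}_2^{\,*}(\widetilde{G}_2G_1).
\]
Using that $\widetilde{K}$ and $\widetilde{G}_2^{\,*}$ are contractive and that $G_2^*G_1=(G_1^*G_2)^*$ is invertible at $\varphi$, I would deduce
\[
\sigma_{\min}\big((\widetilde{\mathbf{K}}\mathbf{G}_2)(\varphi)\big)\ \ge\ \sigma_{\min}\big((\widetilde{\mathbf{K}}\mathbf{G}_1)(\varphi)\big)-\|(\widetilde{\mathbf{G}}_2\mathbf{G}_1)(\varphi)\|.
\]
Taking the infimum over $\varphi$ on the left and estimating the subtrahend by its supremum then yields $\mu_{P,C}\ge\mu_{P_0,C}-d_\nu(P_0,P)$, provided $\mu_{P,C}$ is genuinely given by the infimum above, i.e.\ provided $C$ stabilizes $P$.

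The crux, and the step I expect to be the main obstacle, is therefore to prove that $C$ does stabilize $P$. By (A4) it suffices to show $\det(\widetilde{K}G_2)\in\inv S$ together with $\iota(\det(\widetilde{K}G_2))=\circ$. Invertibility in $S$ is immediate: the pointwise inequality gives $\sigma_{\min}((\widetilde{\mathbf{K}}\mathbf{G}_2)(\varphi))\ge\mu_{P_0,C}-d_\nu(P_0,P)>0$ for every $\varphi$, so the Gelfand transform of $\det(\widetilde{K}G_2)$ is nowhere zero and $\det(\widetilde{K}G_2)\in\inv S$ by semisimplicity. The index is the delicate part. Writing $A=\widetilde{K}G_1$ and $B=\widetilde{K}\widetilde{G}_2^{\,*}(\widetilde{G}_2G_1)$, the displayed identity reads $(\widetilde{K}G_2)(G_2^*G_1)=A-B$; combining $\det(G_2^*G_1)=(\det(G_1^*G_2))^*$ with (I1)--(I2) and the hypothesis $\iota(\det(G_1^*G_2))=\circ$ gives $\iota(\det(\widetilde{K}G_2))=\iota(\det(A-B))$. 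I would then homotope along the segment $t\mapsto A-tB$ in $S^{m\times m}$: a Neumann-series estimate based on $\|\mathbf{A}^{-1}(\varphi)\|\le\mu_{P_0,C}^{-1}$, $\|\mathbf{B}(\varphi)\|\le d_\nu(P_0,P)$, and the strict inequality $d_\nu(P_0,P)<\mu_{P_0,C}$ shows $A-tB$ is invertible in $S$ for all $t\in[0,1]$. Since $\det$ is continuous and $\iota$ is locally constant by (I3), the map $t\mapsto\iota(\det(A-tB))$ is continuous on a connected set into the discrete group $G$, hence constant, so $\iota(\det(\widetilde{K}G_2))=\iota(\det A)=\circ$. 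This is precisely where the index condition built into the definition of $d_\nu$ is indispensable, and where the gap between pointwise invertibility and invertibility over $R$ is bridged. With stabilization of $P$ secured, the pointwise inequality of the preceding paragraph delivers the asserted bound.
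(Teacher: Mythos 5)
The paper does not prove this theorem itself — it is quoted verbatim from \cite{BalSas} — so there is no in-paper proof to compare against; your proposal is a correct reconstruction of essentially the argument given there (Vinnicombe's proof transplanted to the abstract setting). All the key steps check out: the identity $H(P,C)=G(\widetilde{K}G)^{-1}\widetilde{K}$ with the isometry/coisometry reduction to $\inf_\varphi\sigma_{\min}(\widetilde{\mathbf{K}}\mathbf{G}_2)$, the pointwise singular-value estimate coming from $G_2G_2^*+\widetilde{G}_2^{\,*}\widetilde{G}_2=I$ together with $\|G_2^*G_1\|\le 1$, and the homotopy $t\mapsto A-tB$ combined with (I1)--(I3) and (A4) to upgrade the pointwise invertibility of $\widetilde{K}G_2$ to invertibility over $R$, which is exactly where the index condition in the definition of $d_\nu$ is used.
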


The above result says that stabilizability is a robust property of the
plant, since if $C$ stabilizes $P_0$ with a stability margin
$\mu_{P,C}>m$, and $P$ is another plant which is close to $P_0$ in the
sense that $d_\nu(P,P_0)\leq m$, then $C$ is also guaranteed to
stabilize $P$.

\section{The $\nu$-metric when $R=H^\infty$}
\label{section_Hinfty}

Let $H^\infty$ be the Hardy algebra, consisting of all bounded and
holomorphic functions defined on the open unit disk $\mD:= \{ z\in
\mC: |z| <1\}$.

We will now introduce a Banach algebra, $C_{\textrm b}({\mA_{\rho}})$,
which will serve as the Banach algebra $S$ in our abstract set up.

\begin{notation}
Given $\rho\in (0,1)$, let ${\mA_{\rho}}$ be the open annulus
$$
{\mA_{\rho}}:=\{z\in \mC: \rho<|z|<1\}.
$$
We set $
C_{\textrm b}({\mA_{\rho}})=\{F:{\mA_{\rho}}\rightarrow \mC: f \textrm{ is continuous and bounded on }{\mA_{\rho}}\}.$
\end{notation}

\begin{proposition}\label{prop_A2_part1}
Let $\rho \in (0,1)$. With the norm defined by
$$
\|F\|_\infty :=\sup_{z\in {\mA_{\rho}}}|F(z)| \textrm{ for }
F\in C_{\textrm{\em b}}({\mA_{\rho}}),
$$
$C_{\textrm{\em b}}({\mA_{\rho}})$ is a unital semisimple complex Banach
algebra with the involution $\cdot^\ast$ defined by 
$$
(F^\ast)(z)= \overline{F(z)}\quad (z\in {\mA_{\rho}},\;F\in C_{\textrm{\em b}}({\mA_{\rho}})).
$$
\end{proposition}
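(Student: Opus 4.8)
The plan is to verify, one property at a time, the defining features of a unital semisimple complex Banach algebra with involution, all of which flow from standard behaviour of the pointwise operations and the supremum norm. Throughout I write $X=\mA_\rho$ for brevity. First I would record the algebraic structure: under pointwise addition, scalar multiplication, and multiplication, $C_{\textrm b}(X)$ is a commutative complex algebra, and it is unital because the constant function $1$ is continuous and bounded and acts as the multiplicative identity, with $\|1\|_\infty=1$. The norm is submultiplicative, since for $F,G\in C_{\textrm b}(X)$ and any $z\in X$ one has $|F(z)G(z)|=|F(z)|\,|G(z)|\le \|F\|_\infty\|G\|_\infty$, and taking the supremum over $z$ yields $\|FG\|_\infty\le \|F\|_\infty\|G\|_\infty$. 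Hence $C_{\textrm b}(X)$ is a normed algebra.

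Next I would prove completeness, which is the one step requiring a genuine (though entirely standard) argument. Given a Cauchy sequence $(F_n)$ in $C_{\textrm b}(X)$, for each fixed $z$ the sequence $(F_n(z))$ is Cauchy in $\mC$ and hence converges to a limit I call $F(z)$. A routine $\varepsilon/3$ estimate then shows that $F_n\to F$ uniformly on $X$, that $F$ is bounded, and that $F$ is continuous as a uniform limit of continuous functions. Thus $F\in C_{\textrm b}(X)$ and $\|F_n-F\|_\infty\to 0$, so $C_{\textrm b}(X)$ is complete and therefore a Banach algebra.

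For the involution I would verify the four axioms directly from properties of complex conjugation: $(F+G)^*=F^*+G^*$, $(\lambda F)^*=\overline{\lambda}\,F^*$, $(F^*)^*=F$, and, using commutativity, $(FG)^*=F^*G^*=G^*F^*$. Since $|F^*(z)|=|F(z)|$ and conjugation is continuous, $F^*$ is again bounded and continuous with $\|F^*\|_\infty=\|F\|_\infty$, so $\cdot^*$ does map $C_{\textrm b}(X)$ into itself.

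Finally, semisimplicity is the conceptually central property, and it follows immediately once the right characters are identified. For each $z\in X$ the point evaluation $\varphi_z\colon F\mapsto F(z)$ is a nonzero multiplicative linear functional (note $\varphi_z(1)=1$ and $|\varphi_z(F)|\le\|F\|_\infty$), hence a character of $C_{\textrm b}(X)$. The Jacobson radical is the intersection of the kernels of all characters, so any $F$ in the radical satisfies $F(z)=\varphi_z(F)=0$ for every $z\in X$, forcing $F=0$; thus the radical is trivial, the Gelfand transform is injective, and the algebra is semisimple. Equivalently, one may observe $\|F^*F\|_\infty=\|F\|_\infty^2$, so that $C_{\textrm b}(X)$ is a commutative $C^*$-algebra and semisimple for that reason. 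I expect the only point needing real care to be the uniform-limit argument for completeness; every other verification is formal.
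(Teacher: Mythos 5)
Your proposal is correct and follows essentially the same route as the paper: the paper declares the algebraic, norm, and involution verifications straightforward and proves only semisimplicity, using exactly your argument that the point evaluations $\varphi_z$, $z\in\mA_\rho$, are characters whose kernels are maximal ideals with trivial intersection, so the radical vanishes. Your added $C^*$-algebra remark is a valid alternative (and the paper does use the $C^*$-identity later, in the lemma identifying the Gelfand norm with the sup norm).
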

\begin{proof} The verification of the claims is straightforward.
We just give the proof of the semisimplicity. Recall that a
commutative complex Banach algebra is called semisimple if its radical
ideal, namely the intersection of all the maximal ideals of the Banach
algebra is $0$. We also know that kernels of complex homomorphisms are
maximal ideals. For $z\in {\mA_{\rho}}$, the map
$\varphi_{z}:C_{\textrm b}({\mA_{\rho}})\rightarrow \mC$, given by $
\varphi_{z}(F)= F(z)$ for $F\in C_{\textrm b}({\mA_{\rho}})$,
is a complex homomorphism. We have
$$
\bigcap_{z\in {\mA_{\rho}}} \ker \varphi_{z}=\{0\}.
$$
Since the radical ideal is contained in the intersection of the
kernels of the complex homomorphisms $\varphi_{z}$, $z\in {\mA_{\rho}}$,
it must be zero.
\end{proof}

\begin{proposition}\label{prop_A2_part2}
Let $\rho\in (0,1)$.  For
$f\in H^\infty$, define $\calI:H^\infty \rightarrow C_{\textrm{\em b}}({\mA_{\rho}})$ by
$$
(\calI (f))(z)=f(z) \quad (z\in {\mA_{\rho}},\; f \in H^\infty).
$$
Then $\calI$ is an injective map.
\end{proposition}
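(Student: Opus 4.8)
The map $\calI$ is simply the restriction operator sending $f\in H^\infty$ to its restriction $f|_{\mA_\rho}$, and the plan is to first confirm that this restriction genuinely lands in $C_{\textrm b}(\mA_\rho)$, and then to deduce injectivity from the identity theorem for holomorphic functions. Since $\calI$ is clearly linear, injectivity is equivalent to showing that $\calI(f)=0$ forces $f=0$.

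First I would check that $\calI$ is well defined. Any $f\in H^\infty$ is holomorphic on $\mD$, hence continuous on the subset $\mA_\rho\subset\mD$, while the defining bound $\sup_{z\in\mD}|f(z)|<\infty$ immediately gives $\sup_{z\in\mA_\rho}|f(z)|<\infty$. Thus $\calI(f)\in C_{\textrm b}(\mA_\rho)$, so the codomain is correct and the statement makes sense.

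For the injectivity itself, I would take $f,g\in H^\infty$ with $\calI(f)=\calI(g)$ and set $h:=f-g\in H^\infty$. By hypothesis $h$ vanishes on the entire annulus $\mA_\rho$. Now $\mA_\rho$ is a nonempty open subset of the connected open set $\mD$, so by the identity theorem (principle of analytic continuation) a function holomorphic on $\mD$ that vanishes on the open set $\mA_\rho$ must vanish identically on $\mD$. Hence $h=0$, that is $f=g$, which proves that $\calI$ is injective.

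There is no genuine obstacle here; the only point worth flagging is that the argument depends on $\mD$ being connected together with $\mA_\rho$ possessing limit points in $\mD$ (indeed $\mA_\rho$ is itself nonempty and open), which is precisely what licenses the application of the identity theorem. Equivalently, one may phrase the conclusion in terms of the accumulation of zeros: the zeros of $h$ accumulate at every point of $\mA_\rho$, forcing $h\equiv 0$ on all of $\mD$.
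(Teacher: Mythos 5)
Your argument is correct and follows essentially the same route as the paper: linearity reduces injectivity to showing that vanishing on $\mA_\rho$ forces $f\equiv 0$ on $\mD$, which follows from the identity theorem since $\mA_\rho$ is a nonempty open subset of the connected domain $\mD$. The extra check that $\calI$ is well defined is a harmless (and reasonable) addition not spelled out in the paper.
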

\begin{proof} The map $\calI$ is a linear transformation. Suppose that
  $\calI(f)=0$ for some $f\in H^\infty$. This means that the
  restriction of $f$ to the annulus ${\mA_{\rho}}$ is identically $0$, and as $f$ is holomorphic in $\mD$, $f$ must be zero in the whole disk $\mD$. Hence $f=0$.
\end{proof}

Henceforth we will identify $H^\infty$ as a subset of
$C_{\textrm b}({\mA_{\rho}})$ via this map $\calI$.

We will now define an index on invertible elements of $S=C_{\textrm b}({\mA_{\rho}})$.

\begin{notation}
We use the notation $C(\mT)$ for the Banach algebra of
complex-valued continuous functions defined on the unit circle
$\mT:=\{z\in \mC:|z|=1\}$, with all operations defined pointwise, 
with the supremum norm:
$$
\|f\|_\infty= \displaystyle \sup_{\zeta \in \mT}|f(\zeta)|\textrm{ for }f\in C(\mT),
$$
and the involution $\cdot^\ast$ defined pointwise:
$$
f^*(\zeta)=\overline{f(\zeta)}\quad (\zeta \in \mT).
$$
If $F\in \inv C_{\textrm b}({\mA_{\rho}})$, then for each
$r\in (\rho,1)$, the map $F_r:\mT \rightarrow \mC$, given by
$$
F_r(\zeta)=F(r\zeta) \quad (\zeta \in \mT),
$$
belongs to $ \inv C(\mT)$, and so each $F_r$ has a well-defined
(integral) winding number $w(F_r)\in \mZ$ with respect to $0$. 
\end{notation}

Moreover, we now show by the
local constancy of the winding number $w: \inv C(\mT) \rightarrow \mZ$, that
 $r\mapsto w(F_r)$ is constant on $(\rho,1)$.

\begin{proposition}
 If $F\in \textrm{\em inv } C_{\textrm{\em b}}({\mA_{\rho}})$, and $\rho<r<r'<1$, then
$$
w(F_r)=w(F_r').
$$
\end{proposition}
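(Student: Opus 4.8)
The plan is to realize $r\mapsto w(F_r)$ as the composition of a continuous path $r\mapsto F_r$ into $\inv C(\mT)$ with the locally constant winding-number map $w\colon \inv C(\mT)\to\mZ$, and then to use connectedness of the interval. First I would fix $\rho<r<r'<1$ and work on the compact annulus $\overline{A}=\{z\in\mC:r\le|z|\le r'\}$, which is a compact subset of the open annulus ${\mA_\rho}$. Since $F$ is invertible in $C_{\textrm b}({\mA_\rho})$, there is $G\in C_{\textrm b}({\mA_\rho})$ with $FG=1$ pointwise, so $F$ is nowhere zero on ${\mA_\rho}$; in particular every $F_s$, $s\in[r,r']$, lies in $\inv C(\mT)$ and has a well-defined winding number.

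Next I would establish that $s\mapsto F_s$ is continuous from $[r,r']$ into $C(\mT)$ in the supremum norm. The point is that $F$, being continuous on ${\mA_\rho}$, is uniformly continuous on the compact set $\overline{A}$. Given $\varepsilon>0$, choose $\delta>0$ so that $|z-w|<\delta$ forces $|F(z)-F(w)|<\varepsilon$ for $z,w\in\overline{A}$; then for $|s-s'|<\delta$ and any $\zeta\in\mT$ one has $|s\zeta-s'\zeta|=|s-s'|\,|\zeta|=|s-s'|<\delta$, whence $\|F_s-F_{s'}\|_\infty\le\varepsilon$. Thus $s\mapsto F_s$ is a continuous path lying entirely in $\inv C(\mT)$.

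Finally I would invoke the local constancy of $w$: since $w\colon \inv C(\mT)\to\mZ$ is continuous when $\mZ$ carries the discrete topology, the composite $s\mapsto w(F_s)$ is a locally constant function on the connected interval $[r,r']$, hence constant. Evaluating at the endpoints gives $w(F_r)=w(F_{r'})$. Equivalently, one may note that $H(s,\zeta)=F(s\zeta)$ is a continuous $\mC\setminus\{0\}$-valued homotopy between the loops $F_r$ and $F_{r'}$, and the winding number is a homotopy invariant; this homotopy is precisely the underlying reason for the local-constancy step.

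The only genuine subtlety---and the main obstacle to a naive argument---is that ${\mA_\rho}$ is open, so $F$ need not be uniformly continuous on all of ${\mA_\rho}$, and the path $s\mapsto F_s$ need not extend continuously up to $s=1$. This is exactly why the argument must be localized to a compact sub-annulus $\overline{A}$: once restricted there, both the uniform continuity of $F$ and the positivity of $\min_{\overline{A}}|F|$ (guaranteeing $F_s\in\inv C(\mT)$) are automatic, and the connectedness of $[r,r']$ then finishes the proof.
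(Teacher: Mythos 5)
Your proof is correct, and it reaches the conclusion by a slightly different (and somewhat slicker) route than the paper. Both arguments rest on the same two pillars: restricting to the compact sub-annulus $\{z:r\le|z|\le r'\}$, where $F$ is uniformly continuous and $\min|F|>0$, and the homotopy invariance of the winding number. The difference is in how the homotopy invariance is deployed. The paper subdivides $[r,r']$ into finitely many radii $r_0,\dots,r_N$ chosen so that consecutive loops satisfy $\|F_{r_n}-F_{r_{n+1}}\|_\infty<m/2$ (with $m$ a lower bound for $|F|$ on the compact annulus), and then exhibits an explicit straight-line homotopy $\varphi+t(\psi-\varphi)$ between each consecutive pair, verifying by the triangle inequality that it avoids $0$; the winding numbers are then equal link by link. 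You instead observe that $s\mapsto F_s$ is a continuous path in $\inv C(\mT)$ (your uniform-continuity computation $|s\zeta-s'\zeta|=|s-s'|$ is exactly right) and compose it with the locally constant map $w$, finishing by connectedness of $[r,r']$; your alternative phrasing via the single homotopy $H(s,\zeta)=F(s\zeta)$ is the same observation in one step. Your version is shorter and treats the whole interval at once, at the cost of citing the local constancy of $w$ on $\inv C(\mT)$ as a known fact (which the paper does freely elsewhere, e.g.\ in its proof that $W$ is locally constant), whereas the paper's chain of linear homotopies is more self-contained, using only the invariance of $w$ under explicit nonvanishing homotopies. Your closing remark correctly identifies the one genuine subtlety --- that openness of $\mA_\rho$ forces the localization to a compact sub-annulus --- which is precisely the role the compact set $K$ plays in the paper's argument.
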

\begin{proof} We use the fact that the winding numbers $w(\varphi)$, $w(\psi)$ with respect
to $0$ of $\varphi,\psi:\mT\rightarrow  \mC\setminus \{0\}$, are the same if $\varphi$, $\psi$ are
homotopic; see for example \cite[\S2.7.10, p.50]{Die}.

As the annulus $
K:=\{z\in \mC: r\leq |z|\leq r'\}$
is compact, it follows that there is a $m>0$ such that $F(z)$
lies in $\mC\setminus m \mD$ for all $z\in K$. Also, $F$ is uniformly continuous on $K$, and so  we can choose 
$N$ large enough so that with 
$$
r_n:=r+(r'-r)\cdot \frac{n}{N}, \quad n=0,1,\dots, N,
$$
we have that 
$$
\|F_{r_n}-F_{r_{n+1}}\|_\infty<\frac{m}{2},\quad n=0,1,2,\dots, N-1 .
$$
Fix an $n\in \{0,1,2,\dots, N-1\}$. 
Set $\varphi=F_{r_n}$ and $\psi=F_{r_{n+1}}$. Then $\varphi$, $\psi$ belong to  $ \inv C(\mT)$.
Consider the map $H:\mT\times [0,1]\rightarrow \mC\setminus \{0\}$ defined by
$H(\zeta,t)=\varphi(\zeta)+t(\psi(\zeta)-\varphi(\zeta))$, $\zeta\in
\mT$, $t\in [0,1]$. Since
$$
|\varphi(\zeta)+t(\psi(\zeta)-\varphi(\zeta))|\geq
|\varphi(\zeta)|-|t(\psi(\zeta)-\varphi(\zeta))|\geq m-1\cdot
(m/2)=m/2>0,
$$
$H$ is well-defined. $H$ is a homotopy from $\varphi$ to
$\psi$. In particular it follows from the above that $\psi=H(\cdot,1)
\in \inv C(\mT)$, and that the winding numbers of $\varphi$ and $
\psi$ are identical. So it follows that 
$$
w(F_r)=w(F_{r_0})=w(F_{r_1})=\dots=w(F_{r_N})=w(F_r').
$$
This completes the proof.
\end{proof}

\begin{notation}
We now define the map $W:\inv C_{\textrm b}({\mA_{\rho}})\rightarrow \mZ$ by setting
$$
W(F)=w(F_r) \quad (r\in (\rho, 1), \;F\in \inv C_{\textrm b}({\mA_{\rho}})).
$$
\end{notation}

By the preceding discussion, we see that $W$ is well-defined.

We will now prove a sequence of results aimed towards verifying the assumptions (A3) and (A4)
in our abstract setup.

\begin{proposition}\label{proposition_I1}
Let $\rho \in (0,1)$. If $F,G\in \textrm{\em inv }C_{\textrm{\em b}}({\mA_{\rho}})$, then
$$
W(FG)=W(F)+W(G).
$$
\end{proposition}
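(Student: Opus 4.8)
The plan is to reduce the statement $W(FG)=W(F)+W(G)$ to the corresponding multiplicativity of the ordinary winding number on $\inv C(\mT)$, which is a standard fact. Recall that $W$ is defined by $W(F)=w(F_r)$ for any $r\in(\rho,1)$, where $F_r(\zeta)=F(r\zeta)$ and $w$ is the integral winding number on $\inv C(\mT)$; the preceding proposition guarantees this is independent of the choice of $r$.

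First I would fix an arbitrary $r\in(\rho,1)$ and observe that, since $F,G\in\inv C_{\textrm b}(\mA_\rho)$, their pointwise product $FG$ is again in $\inv C_{\textrm b}(\mA_\rho)$ (invertible elements form a group in a unital Banach algebra). Next I would note the key identity $(FG)_r=F_r\,G_r$, which is immediate from the definition: for every $\zeta\in\mT$,
$$
(FG)_r(\zeta)=(FG)(r\zeta)=F(r\zeta)\,G(r\zeta)=F_r(\zeta)\,G_r(\zeta).
$$
Each of $F_r$, $G_r$ lies in $\inv C(\mT)$ by the defining property of the notation already established.

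Then I would invoke multiplicativity of the winding number on $\inv C(\mT)$, namely $w(\varphi\psi)=w(\varphi)+w(\psi)$ for $\varphi,\psi\in\inv C(\mT)$. Applying this with $\varphi=F_r$ and $\psi=G_r$ gives
$$
W(FG)=w((FG)_r)=w(F_rG_r)=w(F_r)+w(G_r)=W(F)+W(G),
$$
which is the desired conclusion.

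The proof is essentially routine, so there is no serious obstacle; the only point requiring any care is the justification of multiplicativity of $w$ on $\inv C(\mT)$. This can be cited from the same homotopy-theoretic source already used in the preceding proposition, or derived from the integral representation of the winding number together with the fact that $\log$ of a product splits additively along a continuous branch. I would simply reference the standard fact (for instance the same treatment in \cite{Die}) rather than reprove it.
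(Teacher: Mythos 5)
Your proof is correct and follows essentially the same route as the paper: fix $r\in(\rho,1)$, use the identity $(FG)_r=F_rG_r$, and invoke additivity of the winding number $w$ on $\inv C(\mT)$. The only difference is that you spell out the intermediate verifications (that $FG$ is invertible and that each restriction lies in $\inv C(\mT)$) which the paper leaves implicit.
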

\begin{proof} For $f,g\in \inv C(\mT)$, we have $w(fg)=w(f)+w(g)$,
and so it follows that for $F,G \in \inv C_{\textrm b}({\mA_{\rho}})$, and $r\in (\rho,1)$,
$$
W(FG)=w((FG)_r)=w(F_{r} \cdot G_{r})= w(F_{r})+w(G_{r})=  W(F)+W(G).
$$
This completes the proof.
\end{proof}

\begin{proposition}\label{proposition_I2}
Let $\rho \in (0,1)$. If $F\in \textrm{\em inv }C_{\textrm{\em b}}({\mA_{\rho}})$, then
$$
W(F^*)=-W(F).
$$
\end{proposition}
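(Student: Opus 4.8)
The plan is to reduce the claim to the analogous statement for the winding number $w$ on $\inv C(\mT)$, namely that $w(g^*)=-w(g)$ for every $g\in \inv C(\mT)$, and then transport this back to $W$ through the definition of $W$.

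First I would unwind the two involutions and the slices $F_r$. For $F\in \inv C_{\textrm b}({\mA_{\rho}})$ and $r\in(\rho,1)$, the definition of $\cdot^*$ on $C_{\textrm b}({\mA_{\rho}})$ gives $(F^*)_r(\zeta)=F^*(r\zeta)=\overline{F(r\zeta)}=\overline{F_r(\zeta)}$ for $\zeta\in\mT$. Since the involution on $C(\mT)$ is also pointwise complex conjugation, this says exactly that $(F^*)_r=(F_r)^*$, where on the right the involution is that of $C(\mT)$. Hence, by the definition of $W$, $W(F^*)=w((F^*)_r)=w((F_r)^*)$, and it remains to prove $w(g^*)=-w(g)$ for the element $g=F_r\in\inv C(\mT)$.

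The core step is therefore that pointwise conjugation negates the winding number on the circle. I would prove this by lifting the argument: writing $\zeta=e^{i\theta}$, a nonvanishing $g\in C(\mT)$ admits a continuous choice of argument $\theta\mapsto\phi(\theta)$ with $g(e^{i\theta})=|g(e^{i\theta})|\,e^{i\phi(\theta)}$, and then $w(g)=(\phi(2\pi)-\phi(0))/(2\pi)$. Since $g^*(e^{i\theta})=|g(e^{i\theta})|\,e^{-i\phi(\theta)}$, the function $-\phi$ is a continuous argument for $g^*$, so $w(g^*)=-(\phi(2\pi)-\phi(0))/(2\pi)=-w(g)$. Alternatively, one can note that $g$ is homotopic within $\inv C(\mT)$ to the monomial $\zeta\mapsto\zeta^{w(g)}$, whose conjugate on $\mT$ is $\zeta\mapsto\zeta^{-w(g)}$ (as $\overline{\zeta}=\zeta^{-1}$ there), and then invoke the homotopy invariance of $w$ already used in the preceding proposition.

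The only real obstacle is this geometric core step $w(g^*)=-w(g)$, and it is mild: because the functions involved are merely continuous rather than smooth, I must rely on the topological definition of the winding number via a continuous lift of the argument (or on homotopy invariance) rather than on any contour-integral formula. Once that fact is established, combining it with the identity $(F^*)_r=(F_r)^*$ finishes the proof immediately.
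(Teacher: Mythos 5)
Your proof is correct and follows essentially the same route as the paper's: both reduce the claim to the identity $(F^*)_r=(F_r)^*$ together with the fact that pointwise conjugation negates the winding number on $\inv C(\mT)$. The only difference is that you supply a proof of that circle-level fact (via a continuous argument lift or homotopy to a monomial), whereas the paper simply cites it.
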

\begin{proof} For $f\in \inv C(\mT)$,
$w(\overline{f(\cdot)})=-w(f)$. So if $F \in
\inv C_{\textrm b}({\mA_{\rho}})$,
$$
W(F^*)=w((F^*)_{r})=w((F_{r})^*)=-w(F_{r})=-W(F).
$$
This completes the proof.
\end{proof}

\begin{proposition}\label{proposition_I3}
Let $\rho \in (0,1)$. Then $W:\textrm{\em inv } C_{\textrm{\em b}}({\mA_{\rho}})\rightarrow
\mZ$ is locally constant, that is, it is continuous when
$\mZ$ is equipped with the discrete topology.
\end{proposition}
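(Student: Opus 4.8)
The plan is to reduce the claim to the local constancy of the ordinary winding number $w$ on $\inv C(\mT)$, which is already implicit in the homotopy argument used in the previous proposition. Since $\mZ$ carries the discrete topology, proving that $W$ is locally constant amounts to showing that each $F\in \inv C_{\textrm b}({\mA_{\rho}})$ admits a ball in the norm $\|\cdot\|_\infty$, centred at $F$, on which $W$ is constant.

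First I would extract a uniform lower bound for $|F|$. Since $F\in \inv C_{\textrm b}({\mA_{\rho}})$, its inverse $F^{-1}$ lies in $C_{\textrm b}({\mA_{\rho}})$ and is therefore bounded, say $\|F^{-1}\|_\infty\leq 1/m$ for some $m>0$; this forces $|F(z)|\geq m$ for every $z\in {\mA_{\rho}}$. Next I would fix any $r\in (\rho,1)$, so that by definition $W(F)=w(F_r)$, and observe that $|F_r(\zeta)|=|F(r\zeta)|\geq m$ for all $\zeta\in \mT$.

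Now suppose $G\in \inv C_{\textrm b}({\mA_{\rho}})$ satisfies $\|G-F\|_\infty<m$. Passing to the restrictions at radius $r$ only decreases the supremum, since $r\zeta$ ranges over a circle contained in ${\mA_{\rho}}$, so
$$
\|G_r-F_r\|_\infty=\sup_{\zeta\in\mT}|G(r\zeta)-F(r\zeta)|\leq \|G-F\|_\infty<m\leq \inf_{\zeta\in\mT}|F_r(\zeta)|.
$$
Consequently the straight-line homotopy $H(\zeta,t)=F_r(\zeta)+t\bigl(G_r(\zeta)-F_r(\zeta)\bigr)$ satisfies $|H(\zeta,t)|\geq |F_r(\zeta)|-\|G_r-F_r\|_\infty>0$ for all $\zeta\in\mT$ and $t\in[0,1]$, exactly as in the proof of the previous proposition. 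Hence $H$ is a homotopy in $\mC\setminus\{0\}$ between $F_r$ and $G_r$, and so $w(G_r)=w(F_r)$; that is, $W(G)=W(F)$. This shows $W$ is constant on the ball of radius $m$ about $F$, proving local constancy.

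I do not expect any serious obstacle here: the entire content is the observation that invertibility in $C_{\textrm b}({\mA_{\rho}})$ yields the uniform bound $|F|\geq m$, together with the stability of the winding number under sup-norm perturbations smaller than this bound, which is precisely the homotopy estimate already established above. The only point requiring a little care is that the admissible perturbation size $m$ depends on $F$ itself, which is exactly what local —- as opposed to uniform —- constancy permits.
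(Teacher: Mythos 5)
Your proof is correct and follows essentially the same route as the paper: restrict to a single circle $r\mT$, use that restriction is sup-norm contractive from ${\mA_{\rho}}$ to $\mT$, and invoke the stability of the winding number under small perturbations. The only difference is presentational — the paper cites the local constancy of $w$ on $\inv C(\mT)$ as a known fact, whereas you make the admissible radius explicit (the lower bound $m$ coming from invertibility of $F$) and re-run the straight-line homotopy argument to establish that stability.
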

\begin{proof} Let $F\in \inv C_{\textrm{b}}({\mA_{\rho}})$. Let $r\in (\rho,1)$.
By the local constancy of the map  $w: \inv C(\mT)\rightarrow \mZ$, it follows that there is a $\delta>0$
such that for all $h\in \inv C(\mT)$ satisfying $\|F_{r}-h\|_\infty<\delta$, we have $w(F_{r})=w(h)$.
Hence we have $W(F)=w(F_{r})=w(H_{r})=W(H)$
for all $H\in \inv C_{\textrm{b}}({\mA_{\rho}})$ satisfying $\|F-H\|_\infty<\delta$.
This proves the desired local constancy of $\overline{W}$.
\end{proof}

Finally we have the following analogue of the classical Nyquist
criterion.

\begin{proposition}\label{proposition_A4}
Let $\rho \in (0,1)$. Suppose that $f\in H^\infty$ is such that
$\calI(f)\in \textrm{\em inv }  C_{\textrm{\em b}}({\mA_{\rho}})$.
Then $f$ is invertible as an element of $H^\infty$ if and only
if $W(\calI(f))=0$.
\end{proposition}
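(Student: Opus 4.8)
The plan is to interpret the integer $W(\calI(f))$, via the argument principle, as a count of zeros of $f$ inside a disk, and then to read off invertibility in $H^\infty$ from the combination of zero-freeness and a uniform lower bound supplied by the hypothesis. This is the natural analogue of the classical Nyquist criterion alluded to in the statement.

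First I would recall that $W(\calI(f))=w(f_r)$ for any $r\in(\rho,1)$, where $f_r(\zeta)=f(r\zeta)$. Since $f$ is holomorphic on all of $\mD$ and the circle $\{z\in\mC:|z|=r\}$ lies in ${\mA_{\rho}}$, the hypothesis $\calI(f)\in\inv C_{\textrm b}({\mA_{\rho}})$ forces $f$ to be zero-free on that circle. Thus $f$ is holomorphic on a neighbourhood of the compact disk $\{z:|z|\le r\}$ and nonvanishing on its boundary, and the argument principle yields $w(f_r)=Z_r$, where $Z_r\ge 0$ is the number of zeros of $f$ in $\{z:|z|<r\}$ counted with multiplicity. (That $Z_r$ is independent of $r\in(\rho,1)$ is consistent with the previously established constancy of $w(F_r)$, reflecting that $f$ has no zeros in the annulus.)

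For the forward implication, if $f$ is invertible in $H^\infty$ then $1/f\in H^\infty$ gives $\inf_{z\in\mD}|f(z)|>0$, so $f$ has no zeros in $\mD$; hence $Z_r=0$ and $W(\calI(f))=0$. For the converse, suppose $W(\calI(f))=0$. Then $Z_r=0$ for every $r\in(\rho,1)$, and since $\bigcup_{r\in(\rho,1)}\{z:|z|<r\}=\mD$, the function $f$ has no zeros in $\mD$ at all. The genuinely essential step is then to upgrade ``zero-free'' to ``bounded away from zero'', because a zero-free $H^\infty$ function (for instance a singular inner function) need not be invertible. Here the hypothesis does exactly this work: on the compact set $\{z:|z|\le\rho\}$ continuity of $f$ together with the absence of zeros gives $\inf_{|z|\le\rho}|f(z)|>0$, while invertibility of $\calI(f)$ in $C_{\textrm b}({\mA_{\rho}})$ gives $\inf_{z\in{\mA_{\rho}}}|f(z)|>0$. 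As $\mD=\{z:|z|\le\rho\}\cup{\mA_{\rho}}$, we conclude $\inf_{z\in\mD}|f(z)|>0$, so that $1/f\in H^\infty$ and $f$ is invertible in $H^\infty$.

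I expect the last upgrade to be the main obstacle: the argument principle only registers the presence or absence of zeros, so one cannot conclude invertibility from $f\neq 0$ on $\mD$ alone. The point of working over the annulus is precisely that $C_{\textrm b}({\mA_{\rho}})$-invertibility encodes a uniform lower bound on $|f|$ near the boundary circle $\mT$ (where compactness is unavailable on the open disk), while compactness of $\{z:|z|\le\rho\}$ disposes of the interior. Everything else is routine once the identification $w(f_r)=Z_r$ is in place.
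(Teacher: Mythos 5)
Your proof is correct, and it follows the same overall skeleton as the paper's argument --- first deduce from $W(\calI(f))=0$ that $f$ is zero-free on $\mD$, then use the $C_{\textrm{b}}({\mA_{\rho}})$-invertibility to control $f$ near the boundary circle --- but with two substitutions worth noting. Where the paper quotes the Nyquist criterion for the disk algebra (applied to the dilates $f_r\in A(\mD)$, citing \cite[Lemma~5.2]{BalSas}) to pass between $w(f_r)=0$ and zero-freeness of $f$ on $\{|z|\le r\}$, you invoke the argument principle directly, identifying $w(f_r)$ with the number of zeros of $f$ in $\{|z|<r\}$; this makes the step self-contained and also yields the forward implication (invertible $\Rightarrow$ no zeros $\Rightarrow$ $W=0$) without further citation. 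And where the paper finishes by identifying the pointwise inverse $g=1/f$ with the given $C_{\textrm{b}}({\mA_{\rho}})$-inverse $G$ on the annulus and then applying the maximum modulus principle to get $\sup_{z\in\mD}|g(z)|\le\|G\|_\infty$, you instead bound $|f|$ from below by splitting $\mD$ into the compact core $\{|z|\le\rho\}$ (compactness plus zero-freeness) and the annulus (where $|f|\ge 1/\|G\|_\infty$). The two finishes are equivalent in content; yours avoids the maximum modulus principle, and you correctly isolate the genuinely essential point, namely that zero-freeness alone (e.g.\ for a singular inner function) does not give invertibility in $H^\infty$, and that the annulus hypothesis is precisely what supplies the uniform lower bound near $\mT$ where compactness is unavailable.
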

\begin{proof} (``If" part) Let $g\in H^\infty$ be the inverse of $f$.
For each $r\in (\rho,1)$, $f_r\in A(\mD)$ defined by $
f_r(z)=f(r z)$ ($z\in \mD$), is invertible in $A(\mD)$. Then $(\calI(f))_r=f_r$.
By the Nyquist criterion for $A(\mD)$, $\varphi\in A(\mD)\bigcap \inv C(\mT)$ is invertible 
in $A(\mD)$ if and only if $w(\varphi)=0$
\cite[Lemma~5.2]{BalSas}. Thus $w(f_r)=0$. Hence $
W(\calI(f))=w((\calI(f))_r)=w(f_r)=0$, completing 
the proof of  the ``if'' part.

\medskip

\noindent (``Only if" part) Let $G \in C_{\textrm{b}}({\mA_{\rho}})$ be
the inverse of $F:=\calI(f)$. If $r\in (\rho,1)$,
then $f_r:=f(r\cdot)\in A(\mD)$ and $f_r\in \inv
C(\mT)$. Since $W(F)=w(f_r)=0$, it follows again by the Nyquist criterion for the
disk algebra recalled above, that $f_r$ is invertible in
$A(\mD)$. In other words, $f(r z)\neq 0$ for all $z\in \mD$. It follows from here,
as the choice of $r\in (\rho,1)$ was arbitrary, that $f(z)\neq 0$ for all $z\in \mD$, that is, $f$ has a pointwise inverse $g:\mD\rightarrow \mC$.
Moreover, $g$ is holomorphic in $\mD$.  We have
$f(z)g(z)=f(z) G(z)=1$ ($\rho<|z|<1$), and so it follows that
$G(z)=g(z)$ ($\rho<|z|<1$). Hence by the
maximum modulus principle,
$$
\sup_{z\in \mD} |g(z)|=\sup_{1>|z|>\rho} |g(z)|\leq \|G\|_\infty<+\infty,
$$
showing that $g\in H^\infty$. Consequently, $f\in \inv H^\infty$. This
completes the proof of the ``only if'' part.
\end{proof}

\begin{theorem}
\label{lemma_disk_algebra}
Let $\rho \in (0,1)$. Set
\begin{eqnarray*}
  R&:=& H^\infty, \\
  S&:=& C_{\textrm{\em b}}({\mA_{\rho}}),  \\
  G&:=& \mZ, \\
  \iota&:=& W.
\end{eqnarray*}
Then {\em (A1)-(A4)} are satisfied.
\end{theorem}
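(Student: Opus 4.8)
The plan is to verify assumptions (A1)--(A4) one at a time under the identifications $R=H^\infty$, $S=C_{\textrm b}(\mA_{\rho})$, $G=\mZ$, $\iota=W$, essentially assembling the propositions already proved above; the only point not handed to us is (A1). For (A1), I would note that $H^\infty$ is a commutative ring with identity $1$, and that it is an integral domain: if $f,g\in H^\infty$ satisfy $fg\equiv 0$ with $f\not\equiv 0$, then the zeros of $f$ are isolated in $\mD$ by the identity theorem, so $g$ vanishes on a dense subset of $\mD$ and hence $g\equiv 0$. This occupies a single line.

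For (A2), Proposition~\ref{prop_A2_part1} already provides the internal structure of $S$, namely that $C_{\textrm b}(\mA_{\rho})$ is a unital commutative semisimple complex Banach algebra under the supremum norm with the pointwise-conjugation involution $\cdot^{\ast}$. The required inclusion $R\subset S$ is then supplied by Proposition~\ref{prop_A2_part2}: the injectivity of $\calI\colon H^\infty\to C_{\textrm b}(\mA_{\rho})$ lets us regard $H^\infty$ as a subset of $C_{\textrm b}(\mA_{\rho})$, exactly as announced after that proposition. Here it is worth observing that (A2) asks only for a subset relation $R\subset S$ and an involution on $S$, so no compatibility of $\cdot^{\ast}$ with $H^\infty$ is required (indeed $f^{\ast}\notin H^\infty$ for nonconstant $f$).

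For (A3), take $(G,+)=(\mZ,+)$, whose identity $0$ serves as $\circ$, and set $\iota=W$; recall that $W$ is well-defined on $\inv C_{\textrm b}(\mA_{\rho})$ because $r\mapsto w(F_r)$ was shown to be constant on $(\rho,1)$. Conditions (I1), (I2), (I3) are then exactly Propositions~\ref{proposition_I1}, \ref{proposition_I2}, and~\ref{proposition_I3}, respectively. For (A4), the abstract requirement that $x\in R\cap(\inv S)$ be invertible in $R$ if and only if $\iota(x)=\circ$ reads, under our identifications, as: for $f\in H^\infty$ with $\calI(f)\in\inv C_{\textrm b}(\mA_{\rho})$, one has $f\in\inv H^\infty$ if and only if $W(\calI(f))=0$. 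This is precisely Proposition~\ref{proposition_A4}.

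The genuine difficulty has already been discharged in the preceding results---the well-definedness of the index $W$ and the Nyquist-type criterion of Proposition~\ref{proposition_A4}---so within this theorem I expect no real obstacle. If any step deserves a moment's care it is merely confirming that the subset identification via $\calI$, together with the choice $\circ=0$, turns Proposition~\ref{proposition_A4} into a verbatim instance of (A4).
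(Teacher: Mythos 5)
Your proposal is correct and follows essentially the same route as the paper: it verifies (A1)--(A4) by citing Propositions~\ref{prop_A2_part1}, \ref{prop_A2_part2}, \ref{proposition_I1}, \ref{proposition_I2}, \ref{proposition_I3}, and \ref{proposition_A4} in exactly the way the paper does. Your extra detail for (A1) (the identity-theorem argument that $H^\infty$ has no zero divisors) is a harmless elaboration of what the paper states in one line.
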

\begin{proof} Since $H^\infty$ is a commutative integral domain with
identity, (A1) holds.

(A2) follows from the results in Propositions~\ref{prop_A2_part1}
and \ref{prop_A2_part2}. Indeed, the set $C_{\textrm{b}}({\mA_{\rho}})$ is
a unital, commutative, complex, semisimple Banach algebra with the
involution $\cdot^\ast$ defined earlier in Proposition~\ref{prop_A2_part1}. Moreover, the map
$\calI:H^\infty\rightarrow C_{\textrm{b}}({\mA_{\rho}})$ is injective.

The map $W:\inv C_{\textrm{b}}({\mA_{\rho}})\rightarrow
\mZ$ satisfies (I1), (I2), (I3) by
Propositions~\ref{proposition_I1}, \ref{proposition_I2},
\ref{proposition_I3}. Thus (A3) holds.

Finally (A4) has been verified in Proposition~\ref{proposition_A4}.
\end{proof}

The definition of the abstract $\nu$-metric given in
Definition~\ref{def_nu_metric}, now takes the following concrete form.
For $P_1, P_2 \in \mS(H^\infty,p,m)$, with the normalized left/right
coprime factorizations
\begin{eqnarray*}
P_1&=& N_{1} D_{1}^{-1}= \widetilde{D}_{1}^{-1} \widetilde{N}_{1},\\
P_2&=& N_{2} D_{2}^{-1}= \widetilde{D}_{2}^{-1} \widetilde{N}_{2},
\end{eqnarray*}
we define
\begin{equation}
\label{eq_nu_metric_specialized}
d_{\nu} (P_1,P_2 )=\!\left\{\!
\begin{array}{ll}
  \|\widetilde{G}_{2} G_{1}\|_{C_{\textrm{b}}({\mA_{\rho}}),\infty} &
  \textrm{if }\det(G_1^* G_2) \in \inv C_{\textrm{b}}({\mA_{\rho}}) \textrm{ and }\\
  &\phantom{\textrm{if }\;}W (\det (G_1^* G_2))=0, \\
  1 & \textrm{otherwise},
\end{array} \right.
\end{equation}
where the notation is as in Subsections~\ref{subsec1}-\ref{subsec6}.

We will now show that in fact the Gelfand norm
$\|\cdot\|_{C_{\textrm{b}}({\mA_{\rho}}),\infty}$ above can be replaced by
the usual $\|\cdot\|_\infty$ norm for elements from $H^\infty$.

\begin{lemma} Let $\rho\in (0,1)$.
Let $A\in (H^\infty)^{p\times m}$. Then
$$
\|A\|_{C_{\textrm{\em b}}({\mA_{\rho}}),\infty}=\|A\|_\infty:= \sup_{z\in \mD} \nm A(z)\nm.
$$
\end{lemma}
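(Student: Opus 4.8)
The plan is to split the claimed identity into the two separate equalities
\[
\|A\|_{C_{\textrm{b}}({\mA_{\rho}}),\infty}=\sup_{z\in{\mA_{\rho}}}\nm A(z)\nm
\qquad\text{and}\qquad
\sup_{z\in{\mA_{\rho}}}\nm A(z)\nm=\sup_{z\in\mD}\nm A(z)\nm,
\]
and to establish each one separately. For the first equality, the inequality ``$\geq$'' is immediate: each point evaluation $\varphi_z$ ($z\in{\mA_{\rho}}$) from the proof of Proposition~\ref{prop_A2_part1} is a character of $C_{\textrm{b}}({\mA_{\rho}})$, hence lies in the maximal ideal space $\mathfrak{M}$, and the entry-wise Gelfand transform satisfies $\mathbf{A}(\varphi_z)=A(z)$; thus $\nm A(z)\nm=\nm\mathbf{A}(\varphi_z)\nm\le\|A\|_{C_{\textrm{b}}({\mA_{\rho}}),\infty}$ for every such $z$, whence $\sup_{z\in{\mA_{\rho}}}\nm A(z)\nm\le\|A\|_{C_{\textrm{b}}({\mA_{\rho}}),\infty}$.

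The reverse inequality is where the large maximal ideal space (morally the Stone--\v{C}ech compactification of ${\mA_{\rho}}$, whose ``extra'' points carry no obvious pointwise meaning) must be controlled, and this is the step I expect to be the main obstacle. The idea is to test the operator norm by rank-one functionals and thereby reduce each value of the Gelfand transform to a single element of the algebra. Fixing $\varphi\in\mathfrak{M}$ and unit vectors $u\in\mC^{p}$, $v\in\mC^{m}$, linearity of the Gelfand transform gives $u^{*}\mathbf{A}(\varphi)v=\widehat{B_{u,v}}(\varphi)$, where $B_{u,v}:=\sum_{i,j}\overline{u_i}A_{ij}v_j\in C_{\textrm{b}}({\mA_{\rho}})$ is a single element of the algebra. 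Since the Gelfand transform of any element of a unital commutative Banach algebra is dominated in supremum by the norm of that element, and $B_{u,v}(z)=u^{*}A(z)v$, I obtain
\[
|u^{*}\mathbf{A}(\varphi)v|=|\widehat{B_{u,v}}(\varphi)|\le\|B_{u,v}\|_\infty=\sup_{z\in{\mA_{\rho}}}|u^{*}A(z)v|\le\sup_{z\in{\mA_{\rho}}}\nm A(z)\nm.
\]
Taking the supremum over unit $u,v$ and then the maximum over $\varphi\in\mathfrak{M}$ yields $\|A\|_{C_{\textrm{b}}({\mA_{\rho}}),\infty}\le\sup_{z\in{\mA_{\rho}}}\nm A(z)\nm$, completing the first equality.

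For the second equality, the bound $\sup_{z\in{\mA_{\rho}}}\nm A(z)\nm\le\sup_{z\in\mD}\nm A(z)\nm$ is trivial because ${\mA_{\rho}}\subset\mD$. For the reverse I would use that $z\mapsto\nm A(z)\nm$ is subharmonic on $\mD$: for fixed unit vectors the scalar function $z\mapsto u^{*}A(z)v$ is holomorphic on $\mD$, so $|u^{*}A(z)v|$ is subharmonic, and $\nm A(z)\nm$ is the (continuous) supremum of these functions, hence subharmonic; equivalently one may invoke the maximum modulus principle for matrix-valued holomorphic functions. Given $w\in\mD$, choosing $s$ with $\max(|w|,\rho)<s<1$ places $w$ inside the disk $\{|z|<s\}$ whose boundary circle $\{|z|=s\}$ lies in ${\mA_{\rho}}$, so the maximum principle on $\{|z|\le s\}$ gives
\[
\nm A(w)\nm\le\max_{|z|=s}\nm A(z)\nm\le\sup_{z\in{\mA_{\rho}}}\nm A(z)\nm.
\]
Taking the supremum over $w\in\mD$ finishes the second equality, and combining the two proves the lemma.
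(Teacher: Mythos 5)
Your proof is correct, and for the main step it takes a genuinely different route from the paper's. The paper establishes the identity $\|F\|_{C_{\textrm{b}}({\mA_{\rho}}),\infty}=\sup_{z\in{\mA_{\rho}}}\nm F(z)\nm$ by first verifying that $C_{\textrm{b}}({\mA_{\rho}})$ is a $C^*$-algebra, invoking Gelfand--Naimark to get that the Gelfand transform is an isometric isomorphism onto $C(\mathfrak{M})$, and then comparing two elements $\mu_1,\mu_2$ whose Gelfand transforms and pointwise values encode $\sigma_{\max}$ of the matrix, via the determinant identity $\det(\mu^2 I - F^*F)=0$ transported back and forth through the (injective) Gelfand transform. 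Your rank-one testing argument --- writing $u^{*}\mathbf{A}(\varphi)v$ as the value at $\varphi$ of the single algebra element $B_{u,v}=\sum_{i,j}\overline{u_i}A_{ij}v_j$ and using only that characters are contractive and that the algebra norm \emph{is} the sup norm over ${\mA_{\rho}}$ --- bypasses the $C^*$-structure and Gelfand--Naimark entirely, and together with the point-evaluation characters for the reverse inequality it gives the same conclusion for any commutative unital Banach function algebra equipped with the sup norm. What the paper's heavier machinery buys is a statement intrinsic to $\sigma_{\max}$ as an element of the algebra; what yours buys is brevity and greater generality of hypothesis. The final step (passing from $\sup_{{\mA_{\rho}}}$ to $\sup_{\mD}$ via subharmonicity of $z\mapsto\nm A(z)\nm$, i.e.\ the matrix-valued maximum modulus principle) is the same in both arguments.
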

\begin{proof} We first note that $C_{\textrm{b}}({\mA_{\rho}})$ is a
$C^*$-algebra. Indeed, for $F\in C_{\textrm{b}}({\mA_{\rho}})$,
$$
\|F^*F\|_\infty
= \sup_{z \in {\mA_{\rho}}} |\overline{F(z)}F(z)|= \sup_{z \in {\mA_{\rho}}} |F(z)|^2=\|F\|_\infty^2.
$$
Therefore (by the Gelfand-Naimark Theorem; see \cite[Theorem~11.18]{Rud}) for all $F\in  C_{\textrm{b}}({\mA_{\rho}})$, we
have
$$
\|F\|_\infty= \max_{\varphi
  \in \mathfrak{M}(C_{\textrm{b}}({\mA_{\rho}}))}
|\widehat{F}(\varphi)|=:\|F\|_{C_{\textrm{b}}({\mA_{\rho}}),\infty}.
$$
In the sequel, we use the notation
$\sigma_{\scriptstyle \textrm{max}}(X)$ (for $X\in
\mC^{p\times m}$) to mean the largest singular value of $X$, that is, the square
root of the largest eigenvalue of $X^*X$ or $XX^*$. The map $\sigma_{\scriptstyle \textrm{max}}(\cdot):
\mC^{p\times m}\rightarrow [0,\infty)$ is continuous.

Now let $F\in (C_{\textrm{b}}({\mA_{\rho}}) )^{p\times m}$.  Then $\sigma_{\scriptstyle
  \textrm{max}}(\widehat{F}(\cdot))$ is a continuous function on the maximal ideal space
$ \mathfrak{M}(C_{\textrm{b}}({\mA_{\rho}}))$, and so (by \cite[Theorem~11.18, p.289]{Rud}) there exists an element  $\mu_1 \in
C_{\textrm{b}}({\mA_{\rho}})$ such that
$$
\widehat{\mu_1}(\varphi)= \sigma_{\scriptstyle
  \textrm{max}}(\widehat{F}(\varphi)) \textrm{ for all }\varphi\in
\mathfrak{M}(C_{\textrm{b}}({\mA_{\rho}})).
$$
Also, the map $z \mapsto \sigma_{\scriptstyle
  \textrm{max}}(F(z))$ is continuous on ${\mA_{\rho}}$. 
 Moreover, we
have that
$$
\sup_{z\in {\mA_{\rho}}}\sigma_{\scriptstyle
  \textrm{max}}(F(z)) =\sup_{z\in {\mA_{\rho}}}
\nm F(z)\nm
<\infty.
$$
Consequently, if we define 
$
\mu_2(z):= \sigma_{\scriptstyle \textrm{max}}(F(z))$ ($z\in {\mA_{\rho}}$), 
then $\mu_2 \in C_{\textrm{b}}({\mA_{\rho}})$. 
This $\mu_2$ satisfies the equation 
 $
\det(\mu_2^2 I-A^\ast A)=0,$ 
which yields, by taking Gelfand transforms, that $
\det((\widehat{\mu_2}(\varphi))^2
I-(\widehat{A}(\varphi))^*\widehat{A}(\varphi))=0$ for all
$\varphi$ belonging to $ \mathfrak{M}(C_{\textrm{b}}({\mA_{\rho}}))$.
Hence there holds
\begin{equation}
\label{norms_are_same_1}
|\widehat{\mu_2}(\varphi)|\leq
\sigma_{\scriptstyle \textrm{max}}(\widehat{A}(\varphi))
=\widehat{\mu_1}(\varphi) \textrm{ for all }
\varphi\in \mathfrak{M}(C_{\textrm{b}}({\mA_{\rho}})).
\end{equation}
Also, since  $
\det((\widehat{\mu_1}(\varphi))^2
I-(\widehat{A}(\varphi))^*\widehat{A}(\varphi))=0$ for
   $\varphi\in \mathfrak{M}(C_{\textrm{b}}({\mA_{\rho}}))$,
it follows that $ \det(\mu_1^2 I-A^\ast A)=0$, which gives the
inequality
\begin{equation}
\label{norms_are_same_2}
|\mu_1(z)|\leq \sigma_{\scriptstyle \textrm{max}}(F(z))
=\mu_2(z)\textrm{ for all }
z\in {\mA_{\rho}}.
\end{equation}
It now follows from \eqref{norms_are_same_1} and
\eqref{norms_are_same_2} that $\|\mu_1\|_{C_{\textrm{b}}({\mA_{\rho}})}=
\|\mu_2\|_{C_{\textrm{b}}({\mA_{\rho}})}$, and so 
$$
\sup_{z\in {\mA_{\rho}}} \sigma_{\scriptstyle
  \textrm{max}}(F(z)) = \max_{\varphi\in \mathfrak{M}(C_{\textrm{b}}({\mA_{\rho}}))}\sigma_{\scriptstyle
  \textrm{max}}(\widehat{F}(\varphi)).
$$
Consequently, $
\|F\|_{C_{\textrm{b}}({\mA_{\rho}}),\infty}=\|F\|_\infty:= \displaystyle \sup_{z\in {\mA_{\rho}}}\nm F(z)\nm$.

Now suppose that $A \in (H^\infty)^{p\times m}$. Then we have
$$
\|A\|_{C_{\textrm{b}}({\mA_{\rho}}),\infty}= \sup_{z\in {\mA_{\rho}} }\nm A(z) \nm =\sup_{z\in \mD} \nm A(z) \nm
=\|A\|_\infty,
$$
we we have used the vector valued version of the Maximum Modulus
Principle (see for example \cite[p.50]{NikI}) to obtain the second
equality.  This completes the proof.
\end{proof}

In light of the above result, the abstract $\nu$-metric now takes the
following form. 

For $P_1, P_2 \in \mS(H^\infty,p,m)$, with the normalized left/right
coprime factorizations
\begin{eqnarray*}
P_1&=& N_{1} D_{1}^{-1}= \widetilde{D}_{1}^{-1} \widetilde{N}_{1},\\
P_2&=& N_{2} D_{2}^{-1}= \widetilde{D}_{2}^{-1} \widetilde{N}_{2},
\end{eqnarray*}
we define
\begin{equation}
\label{eq_nu_metric_specialized_norm}
d_{\nu} (P_1,P_2 ):=\left\{
\begin{array}{ll}
  \|\widetilde{G}_{2} G_{1}\|_{\infty} &
  \textrm{if } \det(G_1^* G_2) \in \inv C_{\textrm{b}}({\mA_{\rho}}) \textrm{ and }\\
  & \phantom{\textrm{if }\;}  W(\det (G_1^* G_2))=0, \\
  1 & \textrm{otherwise},
\end{array} \right.
\end{equation}
where the notation is as in Subsections~\ref{subsec1}-\ref{subsec6}.

\begin{remark}
\label{remark_stabilizable_is_S}
We also remark that the set $\mS(H^\infty,p,m)$ coincides with the set
of stabilizable plants, using the following two facts:
\begin{enumerate}
\item A plant is stabilizable over $H^\infty$ if and only if it
  possesses a coprime factorization. (See \cite{Ino} and \cite{Smi}.)
\item A {\em normalized} coprime factorization over $H^\infty$ exists
  whenever a coprime factorization exists over $H^\infty$. (See for
  example \cite[Theorem~1.1]{Mik}.)
\end{enumerate}
\end{remark}

Summarizing, our main result is the following, where  the
stability margin of a pair $(P,C)\in \mS(H^\infty,p,m)\times \mS(H^\infty, m,p)$ is
$$
\mu_{P,C}=
\left\{ \begin{array}{ll}
\|H(P,C)\|_{\infty}^{-1} &\textrm{if }P \textrm{ is stabilized by }C,\\
0 & \textrm{otherwise.}
\end{array}\right.
$$

\begin{corollary}
$d_\nu$ given by \eqref{eq_nu_metric_specialized_norm} is a metric
on the set of stabilizable plants over $H^\infty$. Moreover, if $P_0$,
$P$ belong to $ \mS(H^\infty,p,m)$ and $C\in \mS(H^\infty, m,p)$, then
$\mu_{P,C} \geq \mu_{P_0,C}-d_{\nu}(P_0,P)$.
\end{corollary}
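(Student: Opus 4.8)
The plan is to deduce the Corollary directly from the abstract results of \cite{BalSas} by checking that, under the identifications made in Theorem~\ref{lemma_disk_algebra}, the concrete formula \eqref{eq_nu_metric_specialized_norm} computes \emph{exactly} the abstract $\nu$-metric \eqref{eq_nu_metric}, and that the concrete stability margin agrees with its abstract counterpart. No new analysis is needed; the work is the bookkeeping that reconciles the two norm conventions and the two defining formulas.

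First I would observe that with the choices $R=H^\infty$, $S=C_{\textrm{b}}(\mA_\rho)$, $G=\mZ$, $\iota=W$ (so that the group identity $\circ$ is the integer $0$), the conditions separating the two cases in \eqref{eq_nu_metric} and in \eqref{eq_nu_metric_specialized_norm} are verbatim the same: the abstract requirement $\det(G_1^* G_2)\in \inv S$ together with $\iota(\det(G_1^* G_2))=\circ$ becomes precisely $\det(G_1^* G_2)\in \inv C_{\textrm{b}}(\mA_\rho)$ together with $W(\det(G_1^* G_2))=0$. In the first case the value assigned is $\|\widetilde{G}_2 G_1\|_{S,\infty}$ in the abstract formula and $\|\widetilde{G}_2 G_1\|_\infty$ in the concrete one. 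Since $\widetilde{G}_2$ and $G_1$ have entries in $R=H^\infty$, their product is a matrix over $H^\infty$, so the Lemma just proved gives $\|\widetilde{G}_2 G_1\|_{C_{\textrm{b}}(\mA_\rho),\infty}=\|\widetilde{G}_2 G_1\|_\infty$. Hence the two formulas agree in every case, i.e. \eqref{eq_nu_metric_specialized_norm} is the abstract $d_\nu$ for this choice of data.

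Next, Theorem~\ref{lemma_disk_algebra} asserts that (A1)--(A4) hold for this data, so Theorem~\ref{thm_d_nu_is_a_metric} applies and yields that $d_\nu$ is a metric on $\mS(H^\infty,p,m)$. By Remark~\ref{remark_stabilizable_is_S}, $\mS(H^\infty,p,m)$ is precisely the set of stabilizable plants over $H^\infty$, which gives the first assertion. For the inequality I would invoke the abstract stability-margin theorem of \cite{BalSas} for the same data; the only point requiring care is that the concrete $\mu_{P,C}$ (defined with $\|\cdot\|_\infty$) coincides with the abstract one (defined with $\|\cdot\|_{S,\infty}$). But when $C$ stabilizes $P$, the matrix $H(P,C)$ lies in $R^{(p+m)\times(p+m)}=(H^\infty)^{(p+m)\times(p+m)}$, so the Lemma again equates the two norms and hence the two definitions of $\mu_{P,C}$; when $C$ does not stabilize $P$, both definitions return $0$. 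The abstract inequality then transfers verbatim.

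I do not expect a genuine obstacle here, since every technical ingredient has already been established; the substance of the argument is exactly the reconciliation of the Gelfand norm with the supremum norm, which is handled entirely by the Lemma. The one point worth flagging is that all statements are made for an arbitrary but fixed $\rho\in(0,1)$; the Lemma shows that on $H^\infty$ matrices the relevant norms are independent of $\rho$, so the value of $d_\nu$ in its first case does not depend on the auxiliary annulus, which is reassuring for the coherence of the definition.
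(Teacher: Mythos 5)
Your proposal is correct and matches the paper's (implicit) argument exactly: the Corollary is deduced by combining Theorem~\ref{lemma_disk_algebra} (so the abstract results of \cite{BalSas} apply with $R=H^\infty$, $S=C_{\textrm{b}}(\mA_\rho)$, $\iota=W$), the Lemma identifying $\|\cdot\|_{C_{\textrm{b}}(\mA_\rho),\infty}$ with $\|\cdot\|_\infty$ on $H^\infty$ matrices, and Remark~\ref{remark_stabilizable_is_S} identifying $\mS(H^\infty,p,m)$ with the stabilizable plants. The norm reconciliation for both $d_\nu$ and $\mu_{P,C}$ is precisely the bookkeeping the paper intends.
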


\subsection{Irrelevance of $\rho\in (0,1)$
in the definition of the $\nu$-metric for $\mS(H^\infty,p,m)$}

Consider the condition
$$
(C): \fbox{$\det(G_1^* G_2) \in \inv C_{\textrm{b}}({\mA_{\rho}}) \textrm{ and }
W (\det (G_1^* G_2))=0.$}
$$
Clearly  only the tail end of the winding numbers are relevant, and
so the noninvertibility in $C_{\textrm{b}}({\mA_{\rho}})$ owing to the
noninveribility of $\det((G_1|_{r\mT})^*G_2|_{r\mT})$ for small $r$'s in $(\rho,1)$
should not really matter.  We remedy this
problem by taking the pointwise limit as $\rho\nearrow 1$ of the $\nu$-metrics
corresponding to the $\rho$'s in $(0,1)$.

For $\rho\in (0,1)$, let $d_\nu^\rho$ denote the $\nu$-metric given by
\eqref{eq_nu_metric_specialized_norm}. Define
$d_{\nu}^\infty$ on plant pairs from $\mS(H^\infty,p,m)$ as
follows. For $P_1,P_2\in \mS(H^\infty,p,m)$,
\begin{equation}
\label{eq_defn_d_nu_infty}
d_{\nu}^\infty(P_1,P_2):= \lim_{\rho\rightarrow 1} d_{\nu}^\rho (P_1,P_2).
\end{equation}
We note that if the condition (C) is satisfied corresponding to
$\rho$ for some $\rho\in (0,1)$, then it is also
satisfies for all $\rho'$ satisfying  $\rho\leq \rho'<1$.
This shows that the numbers $d_{\nu}^\rho(P_1,P_2)$, $\rho\in (0,1)$, are all equal
for all $\rho$'s  beyond a certain $\rho_{\textrm{c}}\in (0,1)$. Thus $d_{\nu}^\infty$,
given by \eqref{eq_defn_d_nu_infty}, is well-defined. We will now check that
$d_{\nu}^\infty$ is a metric on $\mS(H^\infty,p,m)$ and that with this metric,
stabilizability is a robust property of plants.

\begin{theorem}
$d_\nu^\infty$ given by \eqref{eq_defn_d_nu_infty} is a metric on
the set of stabilizable plants over $H^\infty$. Moreover, if $P_0,
P\in  \mS(H^\infty,p,m)$ and $C\in \mS(H^\infty, m,p)$, then $
\mu_{P,C} \geq \mu_{P_0,C}-d_{\nu}^\infty(P_0,P)$.
\end{theorem}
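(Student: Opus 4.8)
The plan is to reduce everything to the family of metrics $d_\nu^\rho$, $\rho\in(0,1)$, each of which is already a genuine metric satisfying the stability-margin inequality, and then to pass to the limit $\rho\nearrow 1$. Concretely, for each fixed $\rho\in(0,1)$ the quadruple $(R,S,G,\iota)=(H^\infty,C_{\textrm{b}}({\mA_{\rho}}),\mZ,W)$ satisfies (A1)--(A4) by Theorem~\ref{lemma_disk_algebra}. Hence Theorem~\ref{thm_d_nu_is_a_metric} applies verbatim and shows that $d_\nu^\rho$, given by \eqref{eq_nu_metric_specialized_norm}, is a metric on $\mS(H^\infty,p,m)$, while the robustness inequality $\mu_{P,C}\geq\mu_{P_0,C}-d_\nu(P_0,P)$ quoted from \cite{BalSas} gives $\mu^\rho_{P,C}\geq\mu^\rho_{P_0,C}-d_\nu^\rho(P_0,P)$ for all $P_0,P\in\mS(H^\infty,p,m)$ and $C\in\mS(H^\infty,m,p)$, where $\mu^\rho_{P,C}$ is the stability margin computed with the Gelfand norm $\|\cdot\|_{C_{\textrm{b}}({\mA_{\rho}}),\infty}$.

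First I would observe that the stability margin is in fact independent of $\rho$. When $C$ stabilizes $P$, the matrix $H(P,C)$ has entries in $H^\infty$, so by the preceding Lemma equating $\|\cdot\|_{C_{\textrm{b}}({\mA_{\rho}}),\infty}$ with $\|\cdot\|_\infty$ on $H^\infty$-matrices, one has $\|H(P,C)\|_{C_{\textrm{b}}({\mA_{\rho}}),\infty}=\|H(P,C)\|_\infty$; moreover stabilizability over $H^\infty$ is itself a $\rho$-independent notion (it only asks that $H(P,C)\in(H^\infty)^{(p+m)\times(p+m)}$). Thus $\mu^\rho_{P,C}=\mu_{P,C}$ for every $\rho$, and the abstract inequality becomes
\[
\mu_{P,C}\geq\mu_{P_0,C}-d_\nu^\rho(P_0,P)\qquad(\rho\in(0,1)).
\]
Letting $\rho\to1$ and using that $d_\nu^\rho(P_0,P)\to d_\nu^\infty(P_0,P)$ (a limit that exists by the eventual constancy already established before the statement), the stability-margin inequality for $d_\nu^\infty$ follows immediately.

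It remains to verify the metric axioms for $d_\nu^\infty$. Non-negativity and symmetry are immediate, since each $d_\nu^\rho$ enjoys them and both properties survive the limit. For the identity of indiscernibles, $d_\nu^\infty(P_1,P_2)=0$ forces $d_\nu^\rho(P_1,P_2)=0$ for all $\rho$ sufficiently close to $1$ (again by eventual constancy); fixing any such $\rho$ and invoking the metric property of $d_\nu^\rho$ yields $P_1=P_2$, while the converse is clear because $d_\nu^\rho(P_1,P_1)=0$ for every $\rho$. The one point needing care is the triangle inequality: given plants $P_1,P_2,P_3$, each of the three distances $d_\nu^\rho(P_i,P_j)$ becomes constant beyond its own threshold $\rho_c^{ij}$, so I would set $\rho_c=\max\{\rho_c^{12},\rho_c^{23},\rho_c^{13}\}$ and apply the triangle inequality for the genuine metric $d_\nu^{\rho}$ at any $\rho>\rho_c$; since all three terms are then already at their limiting values, this gives $d_\nu^\infty(P_1,P_3)\leq d_\nu^\infty(P_1,P_2)+d_\nu^\infty(P_2,P_3)$.

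The main (and essentially only) technical point is this bookkeeping with the three thresholds in the triangle inequality, together with the observation that the stability margin does not depend on $\rho$. Everything else is a routine transfer of properties through a limit that is eventually constant, so no genuine analytic obstacle arises.
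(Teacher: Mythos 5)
Your proposal is correct and follows essentially the same route as the paper: both reduce everything to the metrics $d_\nu^\rho$ (valid by Theorem~\ref{lemma_disk_algebra} and the abstract results of \cite{BalSas}) and pass to the limit $\rho\to 1$, using the eventual constancy of $\rho\mapsto d_\nu^\rho(P_1,P_2)$ for the identity of indiscernibles and the limit of the $\rho$-wise inequalities for the triangle inequality and the stability-margin bound. Your extra bookkeeping with the three thresholds and the explicit remark that $\mu_{P,C}$ is $\rho$-independent (via the norm-equality lemma) are fine refinements of the same argument, not a different approach.
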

\begin{proof} We first show that $d_\nu^\infty$ defines a metric on
$\mS(H^\infty, p,m)$.
\begin{itemize}

\item[(D1)] For $P_1,P_2\in \mS(H^\infty, p,m)$, since $d_{\nu}^\rho(P_1,P_2)\geq 0$ for each $\rho\in (0,1)$,
$$
d_{\nu}^\infty(P_1,P_2)=\lim_{\rho\rightarrow 1} d_{\nu}^\rho (P_1,P_2)\geq 0.
$$
For $P\in \mS(H^\infty, p,m)$,
$
\displaystyle d_{\nu}^\infty(P,P)=\lim_{\rho\rightarrow 1} d_{\nu}^\rho
(P,P)=\lim_{\rho\rightarrow 1} 0=0.
$

\noindent Finally, if $d_{\nu}^\infty(P_1,P_2)= 0$ for $P_1,P_2\in \mS(H^\infty,
p,m)$, then since we have seen that the numbers $d_{\nu}^\rho(P_1,P_2)$, $\rho\in (0,1)$,
are all equal for all  $\rho$'s close enough to $1$,
it must be the case that $d_{\nu}^\rho(P_1,P_2)=0$ for all  $\rho$'s close enough to $1$,
and so $P_1=P_2$.

\item[(D2)] If $P_1,P_2\in \mS(H^\infty, p,m)$, then we have
$$
d_{\nu}^\infty(P_1,P_2)=\lim_{\rho\rightarrow 1} d_{\nu}^\rho (P_1,P_2)
=\lim_{\rho\rightarrow 1} d_{\nu}^\rho (P_2,P_1)=d_{\nu}^\infty(P_2,P_1).
$$

\item[(D3)] Finally, for all $P_1,P_2,P_3\in \mS(H^\infty, p,m)$,
passing the limit as $\rho\rightarrow 1$ in the
triangle inequalities
$$
d_{\nu}^\rho (P_1,P_3)\leq d_{\nu}^\rho (P_1,P_2)+d_{\nu}^\rho (P_2,P_3) \quad
(\rho \in (0,1)),
$$
yields the triangle inequality $
d_{\nu}^\infty (P_1,P_3)\leq d_{\nu}^\infty (P_1,P_2)+d_{\nu}^\infty
(P_2,P_3)$.
\end{itemize}
Thus $d_\nu^\infty$ defines a metric on $\mS(H^\infty, p,m)$. Next we
show that stabilizability is a robust property of the
plant. Let $P_0$, $P$ belong to $ \mS(H^\infty,p,m)$ and $C\in
\mS(H^\infty, m,p)$. Then $\mu_{P,C} \geq \mu_{P_0,C}-d_{\nu}^\rho(P_0,P)$ ($\rho\in (0,1)$).
Again passing the limit as $\rho\rightarrow 1$, we obtain $
\mu_{P,C} \geq \mu_{P_0,C}-d_{\nu}^\infty(P_0,P)$.
This completes the proof.
\end{proof}

\subsection{$d_\nu^\infty$ is an extension of the ``classical''
  $\nu$-metric}\label{subsection_genuine_extension}

In \cite{Vin}, the $\nu$-metric for rational plants (and more
generally elements of $\mS(A(\mD),p,m)$) was defined as follows.
For $P_1, P_2 \in \mS(A(\mD),p,m)$, with the normalized left/right
coprime factorizations
\begin{eqnarray*}
P_1&=& N_{1} D_{1}^{-1}= \widetilde{D}_{1}^{-1} \widetilde{N}_{1},\\
P_2&=& N_{2} D_{2}^{-1}= \widetilde{D}_{2}^{-1} \widetilde{N}_{2},
\end{eqnarray*}
we define
\begin{equation}
\label{eq_nu_metric_specialized_disk_algebra}
d_{\nu,{\scriptstyle\textrm{classical}}} (P_1,P_2 ):=\left\{
  \begin{array}{ll}
  \|\widetilde{G}_{2} G_{1}\|_{\infty} &
  \textrm{if } \det(G_1^* G_2) \in \inv C(\mT) \textrm{ and }\\
  & \phantom{\textrm{if }\;} w(\det(G_1^* G_2) )=0, \\
  1 & \textrm{otherwise},
\end{array} \right.
\end{equation}
where the notation is as in Subsections~\ref{subsec1}-\ref{subsec6}.

In this subsection we will show that our $\nu$-metric, defined by
\eqref{eq_defn_d_nu_infty}, coincides exactly with the above metric
defined by \eqref{eq_nu_metric_specialized_disk_algebra}, when the
data $P_1,P_2$ belong to $\mS(A(\mD),p,m)$ (instead of the bigger set
$\mS(H^\infty,p,m)$).

\begin{theorem}
Let $P_1, P_2 \in \mS(A(\mD),p,m)$. Then
$$
d_{\nu,{\scriptstyle\textrm{\em classical}}} (P_1,P_2)=d_{\nu}^\infty(P_1,P_2).
$$
\end{theorem}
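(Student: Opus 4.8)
The plan is to reduce the claimed identity to a comparison of the two case-selecting conditions, handling separately a regular case and a boundary-degenerate case. Write $\Delta := \det(G_1^* G_2)$. Since $P_1,P_2 \in \mS(A(\mD),p,m)$, the matrices $G_1,G_2,\widetilde{G}_2$ have entries in the disk algebra and therefore extend continuously to $\overline{\mD}$; consequently $\Delta$ is continuous on $\overline{\mD}$, with $\Delta(z) = \det\big(\overline{G_1(z)}^{\,T} G_2(z)\big)$, and its restriction to $\mT$ is exactly the boundary function used in \eqref{eq_nu_metric_specialized_disk_algebra}. I would compute both metrics from the \emph{same} normalized coprime factorizations. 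By the preceding Lemma the Gelfand norm agrees with the pointwise supremum norm on disk-algebra matrices, and the maximum modulus principle gives $\sup_{z\in \mA_{\rho}} \nm (\widetilde{G}_2 G_1)(z)\nm = \sup_{\zeta\in\mT} \nm (\widetilde{G}_2 G_1)(\zeta)\nm$, so the number $\|\widetilde{G}_2 G_1\|_\infty$ appearing in \eqref{eq_defn_d_nu_infty} (via \eqref{eq_nu_metric_specialized_norm}) and in \eqref{eq_nu_metric_specialized_disk_algebra} is literally the same. Everything thus reduces to showing that the two selecting conditions return the same value.

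First I would treat the regular case, in which $\Delta$ is nonvanishing on $\mT$. By compactness of $\overline{\mD}$ and continuity of $\Delta$, there is $\eta>0$ with $\Delta\neq 0$ on the closed collar $\{1-\eta\leq |z|\leq 1\}$; hence $\Delta\in \inv C_{\textrm b}(\mA_{\rho})$ for every $\rho\in[1-\eta,1)$. For such $\rho$ and any $r\in(\rho,1)$, the map $(\zeta,s)\mapsto \Delta(s\zeta)$, $s\in[r,1]$, is a homotopy in $\mC\setminus\{0\}$ from $\Delta_r$ to $\Delta|_{\mT}$, so by homotopy invariance of the winding number $W(\Delta)=w(\Delta_r)=w(\Delta|_{\mT})$. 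Thus the classical condition $w(\Delta|_{\mT})=0$ holds if and only if the condition $W(\Delta)=0$ in (C) holds (for all $\rho$ close to $1$), and in either outcome both metrics select the common value $\|\widetilde{G}_2 G_1\|_\infty$ or both select $1$. Since $d_\nu^\rho$ is eventually constant as $\rho\nearrow 1$, this gives $d_\nu^\infty = d_{\nu,\textrm{classical}}$ in the regular case.

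The crux is the boundary-degenerate case, where $\Delta(\zeta_0)=0$ for some $\zeta_0\in\mT$. Then the classical condition fails and $d_{\nu,\textrm{classical}}=1$, so I must show $\|\widetilde{G}_2 G_1\|_\infty=1$, which forces $d_\nu^\infty=1$ regardless of whether condition (C) happens to hold on the open annulus. Here I would invoke the boundary normalization identities on $\mT$: $G_1^* G_1 = I_m$, and the fact that $\left[\begin{smallmatrix} G_2^* \\ \widetilde{G}_2 \end{smallmatrix}\right]$ is unitary (which follows from $G_2^* G_2 = I_m$, $\widetilde{G}_2\widetilde{G}_2^* = I_p$ and $\widetilde{G}_2 G_2 = 0$). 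Multiplying this unitary by $G_1$ and using $G_1^* G_1 = I_m$ yields, pointwise on $\mT$,
\[
G_1^* G_2\, G_2^* G_1 + (\widetilde{G}_2 G_1)^*\, \widetilde{G}_2 G_1 = I_m,
\]
so the singular values of $G_2^* G_1$ and of $\widetilde{G}_2 G_1$ are complementary. Since $\Delta(\zeta_0)=\overline{\det\big((G_2^* G_1)(\zeta_0)\big)}=0$ forces $\sigma_{\min}\big((G_2^* G_1)(\zeta_0)\big)=0$, it follows that $\sigma_{\max}\big((\widetilde{G}_2 G_1)(\zeta_0)\big)=1$; combined with the bound $\nm(\widetilde{G}_2 G_1)(\zeta)\nm\leq 1$ on all of $\mT$ (again read off the identity) and the maximum modulus principle, this gives $\|\widetilde{G}_2 G_1\|_\infty=1$, and hence $d_\nu^\infty = 1 = d_{\nu,\textrm{classical}}$.

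I expect this second case to be the main obstacle: a priori $\Delta$ may vanish on $\mT$ yet stay nonvanishing on the open annulus $\mA_{\rho}$, so the $d_\nu^\infty$-condition could still fire and return $\|\widetilde{G}_2 G_1\|_\infty$ rather than $1$; the singular-value identity on $\mT$ is precisely what pins this norm to $1$ at a boundary zero of $\Delta$ and reconciles the two definitions. Combining the regular and degenerate cases yields $d_\nu^\infty(P_1,P_2)=d_{\nu,\textrm{classical}}(P_1,P_2)$ for all $P_1,P_2\in\mS(A(\mD),p,m)$.
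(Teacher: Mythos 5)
Your proposal is correct, and it diverges from the paper's argument in one substantive place. The paper splits according to which of the two quantities is $<1$: if $d_{\nu,\textrm{classical}}<1$ it uses the uniform convergence of $\zeta\mapsto\det((G_1(r\zeta))^*G_2(r\zeta))$ to the boundary symbol together with local constancy of $w$ to get condition (C) for $\rho$ near $1$; conversely, if $d_\nu^\infty<1$ it observes that invertibility in $C_{\textrm{b}}(\mA_\rho)$ forces the $\varphi_r$ to be uniformly bounded away from $0$, so their uniform limit $\Delta|_{\mT}$ is already in $\inv C(\mT)$ with winding number $0$ --- in other words, the paper shows your ``boundary-degenerate'' case simply cannot coexist with condition (C), because a function bounded below on the open annulus and continuous on $\overline{\mD}$ cannot vanish on $\mT$. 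Your regular case is essentially the paper's first direction with the homotopy $(\zeta,s)\mapsto\Delta(s\zeta)$ replacing norm-convergence plus local constancy; these are interchangeable. Where you genuinely differ is the degenerate case: instead of ruling it out, you sidestep the question of whether (C) fires by invoking the pointwise normalization identity $(G_2^*G_1)^*(G_2^*G_1)+(\widetilde{G}_2G_1)^*(\widetilde{G}_2G_1)=I_m$ on $\mT$ to pin $\|\widetilde{G}_2G_1\|_\infty=1$ at any boundary zero of $\Delta$, which makes both branches of the $d_\nu^\rho$ definition return $1$. That argument is sound (the identity follows from $G_2G_2^*+\widetilde{G}_2^*\widetilde{G}_2=I_{p+m}$ and $G_1^*G_1=I_m$ on $\mT$, all of which hold for normalized factorizations over $A(\mD)$) and buys a robustness the paper's proof does not need here: it yields the conclusion without ever deciding the invertibility of $\Delta$ on the annulus, and would survive in settings where boundary vanishing and interior invertibility could coexist. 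The paper's route is shorter because for disk-algebra data that coexistence is impossible. Both proofs rely on the same preliminary lemma identifying the Gelfand norm with the supremum norm, so the numerical values compared are literally identical; your proposal is a valid, slightly more self-contained alternative.
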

\begin{proof} Let $d_{\nu,{\scriptstyle\textrm{classical}}}
(P_1,P_2 )<1$.  Then $\det(G_1^* G_2) \in \inv C(\mT)$. Since the
map $z\mapsto \det ( (G_1(z))^*G_2(z))$ is continuous on
$\overline{\mD}$, it follows that the
 two maps
$
\zeta\mapsto \det( (G_1(r \zeta))^* G_2(r \zeta))$  and $
\zeta \mapsto \det( (G_1( \zeta))^* G_2( \zeta))$ ($\zeta\in \mT$)
are close in the norm of $C(\mT)$ for all
$r$'s close enough to $1$. Consequently their winding numbers are equal. Hence it follows
that for a $\rho$ sufficiently close to $1$, when $\det(G_1^* G_2)$ is considered as an element $F$ of $C_{\textrm{b}}({\mA_{\rho}})$, it is invertible in 
$C_{\textrm{b}}({\mA_{\rho}})$, we have
that the $F_r$ are invertible in $C(\mT)$ for all $r$'s close enough to $1$, and their
winding numbers are $0$. Thus the condition (C) is satisfied for all $\rho$'s close enough to
$1$.  Hence $d_{\nu}^\rho(P_1,P_2)=\|\widetilde{G}_{2} G_{1}\|_{\infty}
=d_{\nu,{\scriptstyle\textrm{classical}}} (P_1,P_2
)$ for all $\rho$'s close enough to $1$. Consequently,
$d_{\nu}^\infty(P_1,P_2)=d_{\nu,{\scriptstyle\textrm{classical}}}
(P_1,P_2 )$ ($<1$).

\medskip

Now suppose that $d_{\nu}^\infty(P_1,P_2)<1$. Then
$d_{\nu}^\rho(P_1,P_2)$ is a constant $<1$ for all
$\rho$'s sufficiently close to $1$.
This implies that the condition (C) is satisfied for all $\rho$'s close enough to $1$.
Hence the maps
$$
\zeta\stackrel{\varphi_r}{\mapsto} \det( (G_1(r \zeta))^* G_2(r
\zeta))\quad (\zeta\in \mT)
$$
are all elements of $\inv C(\mT)$ for all $r$'s close enough to $1$, and
moreover, their winding numbers are all equal to $0$. Owing to the
invertibility in $C_{\textrm{b}}({\mA_{\rho}})$, it follows that these
maps $\varphi_r$ are uniformly bounded away from $0$ for all $r$'s
close enough to $1$. Also, these maps converge in $C(\mT)$ to the map
$$
\zeta \stackrel{\varphi}{\mapsto} \det( (G_1( \zeta))^* G_2(
\zeta))\quad (\zeta\in \mT).
$$
Hence $\varphi \in \inv C(\mT)$. Since the winding number map
$w:\inv C(\mT)\rightarrow \mZ$ is locally constant, we can also
conclude that $w(\varphi)=0$. Hence
$$
d_{\nu,{\scriptstyle\textrm{classical}}} (P_1,P_2 )
=\|\widetilde{G}_{2} G_{1}\|_{\infty}=d_\nu^\infty(P_1,P_2)\;(<1).
$$
This completes the proof.
\end{proof}

\subsection{$d_\nu^\infty$ is an extension of the
  $\nu$-metric defined for $R=QA$ in \cite{BalSas2}}
\label{subsection_nu_versus_old_for_QA}

In \cite{BalSas2}, a $\nu$-metric was defined when $R=QA$,
and we recall the definition below.

First of all, we use the notation $QC$ for the $C^*$-subalgebra
of $L^\infty(\mT)$ of {\em quasicontinuous} functions: $
QC:= (H^\infty + C(\mT) ) \bigcap \overline{(H^\infty + C(\mT) )}$.
The Banach algebra $QA$ of
analytic quasicontinuous functions is $QA:= H^\infty \bigcap QC$.
For $P_1, P_2 \in \mS(QA,p,m)$, with the normalized left/right
coprime factorizations
\begin{eqnarray*}
P_1&=& N_{1} D_{1}^{-1}= \widetilde{D}_{1}^{-1} \widetilde{N}_{1},\\
P_2&=& N_{2} D_{2}^{-1}= \widetilde{D}_{2}^{-1} \widetilde{N}_{2},
\end{eqnarray*}
we define
\begin{equation}
\label{eq_nu_metric_aaa}
d_{\nu} (P_1,P_2 ):=\left\{
\begin{array}{ll}
  \|\widetilde{G}_{2} G_{1}\|_{\infty} &
  \textrm{if } \det(G_1^* G_2) \in \inv QC \textrm{ and }\\
& \phantom{\textrm{if }}\; \textrm{Fredholm index of }
  T_{\det (G_1^* G_2)}=0, \\
  1 & \textrm{otherwise}. \end{array}
\right.
\end{equation}
where the notation is as in Subsections~\ref{subsec1}-\ref{subsec6}.

In this subsection we will show that our $\nu$-metric, defined by
\eqref{eq_defn_d_nu_infty}, coincides exactly with the above metric
defined by \eqref{eq_nu_metric_aaa}, when the
data $P_1,P_2$ belong to $\mS(QA,p,m)$ (instead of the bigger set
$\mS(H^\infty,p,m)$).

If $\varphi\in L^1(\mT)$, then  $\varphi_{(r)}$ ($0\leq r<1$) is the map defined by
$$
\varphi_{(r)}(\zeta)=(f\ast P_r)(\zeta)\quad  (\zeta\in \mT).
$$
Here $P_r$ denotes the Poisson kernel, given by
$$
P_r(\theta)=\sum_{k\in \mZ} r^{|k|} e^{ik\theta}\quad \theta \in[0,2\pi).
$$
Then it is straightforward to see that $(\varphi^\ast)_{(r)}=(\varphi_{(r)})^\ast$.
A result of Sarason \cite[Lemma~6]{Sar}
says that for $\varphi\in QC$ and $\psi\in L^\infty(\mT)$,
$$
\lim_{r\rightarrow 1} \|\varphi_{(r)} \psi_{(r)}-(\varphi \psi)_{(r)}\|_\infty =0.
$$
We will also use the result given below; see
 \cite[Theorem~7.36]{Dou}, \cite[Part~B, Theorem~4.5.10]{NikI}.

\begin{proposition}
\label{prop_Dou}
If $f\in H^\infty+C(\mT)$, then $T_f$ is Fredholm if and only if there exist
$\delta, \epsilon>0$ such that
$$
|f_{(r)}(e^{it})| \geq \epsilon \textrm{ for } 1-\delta <r<1, \;t\in [0,2\pi).
$$
Moreover, then the Fredholm index of $T_f$ (namely, $\dim( \ker T_f)-\dim( \ker T_f^\ast)$)
is the negative of the winding number with
respect to the origin of the curves $f_{(r)}$ for $1-\delta <r<1$.
\end{proposition}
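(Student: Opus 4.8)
The plan is to derive Proposition~\ref{prop_Dou} from three classical pillars of Toeplitz theory, each available in \cite{Dou}: Hartman's theorem (the Hankel operator $H_\phi$ is compact if and only if $\phi\in H^\infty+C(\mT)$), the semicommutator identity $T_fT_g-T_{fg}=-H_{\bar f}^{\,*}H_g$, and the Gohberg--Krein index theorem for continuous symbols (for $\phi\in\inv C(\mT)$, $T_\phi$ is Fredholm with $\ind T_\phi=-w(\phi)$). First I would isolate the easy implication. Suppose the stated condition holds, so the harmonic extension $f_{(r)}$ stays bounded away from $0$ for $r$ near $1$. Using the description of the maximal ideal space $\mathfrak{M}(H^\infty+C(\mT))=\mathfrak{M}(H^\infty)\setminus\mD$ together with the corona theorem (density of $\mD$ in $\mathfrak{M}(H^\infty)$), the boundary cluster values of $f_{(r)}$ are exactly the values of the Gelfand transform $\widehat f$ over this fiber space; hence the condition forces $\widehat f$ to be zero-free, i.e.\ $f$ is invertible in $H^\infty+C(\mT)$. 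If $g=f^{-1}\in H^\infty+C(\mT)$, then $H_g$ and $H_f$ are compact by Hartman, so by the semicommutator identity $T_fT_g-I$ and $T_gT_f-I$ are compact, and $T_f$ is Fredholm.

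For the reverse implication I would run a reproducing-kernel argument, which I expect to be the main obstacle. Let $k_w(\zeta)=(1-|w|^2)^{1/2}(1-\bar w\zeta)^{-1}$ be the normalized Szeg\H{o} kernel at $w\in\mD$; since $|k_w(\zeta)|^2$ is the Poisson kernel at $w$, the Berezin transform satisfies $\langle T_f k_w,k_w\rangle=f_{(|w|)}(w/|w|)$ exactly. The key analytic input is the refinement, valid precisely because $f\in H^\infty+C(\mT)$, that $\|T_f k_w-f_{(|w|)}(w/|w|)\,k_w\|\to 0$ as $|w|\to 1$. Granting this, suppose $T_f$ is Fredholm but the condition fails: then there is a sequence $w_n\to\partial\mD$ with $f_{(|w_n|)}(w_n/|w_n|)\to 0$, whence $\|T_f k_{w_n}\|\to 0$ while $k_{w_n}\to 0$ weakly in $(H^2)$. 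This exhibits a singular sequence, so $0\in\sigma_{\mathrm{ess}}(T_f)$, contradicting Fredholmness. Thus $T_f$ Fredholm forces the $\delta,\epsilon$ condition, completing the criterion; the norm-asymptotic lemma and the corona-based identification of $\mathfrak{M}(H^\infty+C(\mT))$ are the deep ingredients here.

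It remains to match the index with $-w(f_{(r)})$. Under the equivalent condition $f$ is invertible in $H^\infty+C(\mT)$ and $w(f_{(r)})$ is a fixed integer $k$ for all $r\in(1-\delta,1)$. I would argue that $\ind T_\bullet:\inv(H^\infty+C(\mT))\to\mZ$ is a \emph{homomorphism}: by the semicommutator identity $T_{fg}\equiv T_fT_g$ modulo compacts, so $\ind T_{fg}=\ind T_f+\ind T_g$. Since $\phi\mapsto T_\phi$ is isometric and the Fredholm index is locally constant, this homomorphism is locally constant and therefore factors through $\pi_0(\inv(H^\infty+C(\mT)))$. The latter group is isomorphic to $\mZ$, the isomorphism being exactly $\phi\mapsto w(\phi_{(r)})$ for $r$ near $1$ (each invertible decomposes as $\phi=\zeta^k e^{\psi}$ with $\psi\in H^\infty+C(\mT)$). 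Finally I would pin down the normalization on a generator: the coordinate function $\zeta\in A(\mD)\subset H^\infty+C(\mT)$ gives the unilateral shift $T_\zeta$ with $\ind T_\zeta=-1$, while $w(\zeta_{(r)})=w(r\,\cdot)=1$. Hence the homomorphism sends the generator of winding number $1$ to index $-1$, and therefore $\ind T_f=-w(f_{(r)})$ for $r\in(1-\delta,1)$, as claimed. The only nonroutine point in this last stage is invoking the connected-component structure of $\inv(H^\infty+C(\mT))$ via the winding number, which is classical and can be cited from \cite{Dou}.
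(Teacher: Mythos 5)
The paper offers no proof of this proposition to compare against: it is quoted verbatim from \cite[Theorem~7.36]{Dou} (see also \cite[Part~B, Theorem~4.5.10]{NikI}). Judged on its own terms, your first paragraph (harmonic-extension condition $\Rightarrow$ invertibility of $f$ in $H^\infty+C(\mT)$ via the corona theorem, $\Rightarrow$ Fredholmness via Hartman and the semicommutator identity) and your third paragraph (the index as a locally constant homomorphism on $\inv (H^\infty+C(\mT))$ factoring through $\pi_0\cong\mZ$, normalized on $T_\zeta$) follow the classical line and are fine modulo the standard citations you indicate.

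The second paragraph, however, has a fatal gap: the ``key analytic input'' $\|T_f k_w-f_{(|w|)}(w/|w|)\,k_w\|\to 0$ as $|w|\to1$ is false for general $f\in H^\infty+C(\mT)$. Take $f=b$ an infinite Blaschke product with zeros $w_n\to\partial\mD$; then $f\in H^\infty\subset H^\infty+C(\mT)$, $T_bk_w=bk_w$, and since $|b|=1$ a.e.\ on $\mT$ and $\langle bk_w,k_w\rangle=b(w)$, one computes $\|bk_w-b(w)k_w\|^2=1-|b(w)|^2$, which equals $1$ at every $w=w_n$. (The lemma does hold for $f\in C(\mT)$; it is the $H^\infty$ summand that destroys it, because the Berezin variance of a bounded analytic function need not vanish at the boundary.) Consequently the singular-sequence argument does not run: $f_{(|w_n|)}(w_n/|w_n|)\to0$ only gives $\langle T_fk_{w_n},k_{w_n}\rangle\to0$, and since $|\langle T_fk_w,k_w\rangle|\le\|T_fk_w\|$ is an inequality in the wrong direction, this is compatible with $\|T_fk_{w_n}\|$ staying bounded below (again $f=b$: $\|T_bk_{w_n}\|=1$ for all $n$). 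What you are missing is exactly the hard half of Douglas's theorem, namely that Fredholmness of $T_f$ forces $f$ to be invertible in the Banach algebra $H^\infty+C(\mT)$ --- not merely in $L^\infty(\mT)$, of which $H^\infty+C(\mT)$ is not an inverse-closed subalgebra (any infinite Blaschke product is again a counterexample). The standard proofs obtain this from the structure of the Toeplitz algebra over $H^\infty+C(\mT)$ modulo the compacts, or by localization over $\mathfrak{M}(H^\infty)$; some such argument must replace the false lemma before your proof can stand.
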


\begin{theorem}
  Let $P_1, P_2 \in \mS(QA,p,m)$. Then
  $d_{\nu,QA} (P_1,P_2
  )=d_{\nu}^\infty(P_1,P_2)$.
\end{theorem}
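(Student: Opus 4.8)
The plan is to reduce the asserted identity to an equivalence of the two conditions that make each metric strictly less than $1$. Both $d_{\nu,QA}$ and $d_\nu^\infty$ equal $\|\widetilde{G}_2 G_1\|_\infty$ whenever their respective ``nontriviality'' conditions hold and equal $1$ otherwise, and since this norm does not depend on $\rho$, it suffices to show that the condition defining $d_{\nu,QA}(P_1,P_2)<1$, namely
$$
\det(G_1^* G_2)\in \inv QC \quad\textrm{and}\quad T_{\det(G_1^* G_2)}\textrm{ is Fredholm of index }0,
$$
is equivalent to condition $(C)$ holding for all $\rho$ sufficiently close to $1$, which is exactly $d_\nu^\infty(P_1,P_2)<1$. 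Write $\varphi:=\det(G_1^* G_2)$ for the boundary symbol. Since the entries of $G_1,G_2$ lie in $QA=H^\infty\cap QC$ and $QC$ is a self-adjoint algebra, the entries of $G_1^* G_2$, and hence $\varphi$, lie in $QC\subset H^\infty+C(\mT)$, so Proposition~\ref{prop_Dou} applies to $T_\varphi$.

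The heart of the proof is a single estimate comparing two boundary regularizations of $\det(G_1^*G_2)$. Viewed as the element $\Phi\in C_{\textrm{b}}(\mA_\rho)$ given by $\Phi(z)=\det\big((G_1(z))^* G_2(z)\big)$, its radial restrictions $\Phi_r(\zeta)=\det\big((G_1(r\zeta))^* G_2(r\zeta)\big)$ are what enter $W(\Phi)=w(\Phi_r)$ and the invertibility test in $C_{\textrm{b}}(\mA_\rho)$; note $\Phi|_{\mT}=\varphi$. Viewed as the $QC$ symbol $\varphi$, the relevant objects are the harmonic extensions $\varphi_{(r)}$ appearing in Proposition~\ref{prop_Dou}. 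I will show
$$
\lim_{r\to 1}\big\|\Phi_r-\varphi_{(r)}\big\|_\infty=0.
$$
For this I first record that for $g\in H^\infty$ one has $g(r\zeta)=g_{(r)}(\zeta)$ and $\overline{g(r\zeta)}=(\overline g)_{(r)}(\zeta)$, so each entry of $\Phi_r$ is a finite sum of \emph{products} of harmonic extensions of the $QC$ functions $(G_1^*)_{ik}$ and $(G_2)_{kj}$, whereas the corresponding entry of the harmonic extension of $G_1^* G_2$ is the harmonic extension of the \emph{product}. Sarason's lemma (\cite[Lemma~6]{Sar}, quoted above) closes this gap entry by entry as $r\to1$. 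Expanding the determinant as a polynomial in the entries, using that all functions involved lie in the algebra $QC$ and are uniformly bounded near $\mT$, a telescoping argument together with repeated application of Sarason's lemma upgrades the entrywise statement to the determinant, yielding the displayed estimate.

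Granting the estimate, the equivalence follows from homotopy invariance of the winding number. If the QA-condition holds, then $T_\varphi$ is Fredholm, so by Proposition~\ref{prop_Dou} there are $\epsilon,\delta>0$ with $|\varphi_{(r)}|\ge\epsilon$ for $1-\delta<r<1$, and index $0$ forces $w(\varphi_{(r)})=0$ there; the estimate then gives $|\Phi_r|\ge\epsilon/2$ and, via the straight-line homotopy (which stays off $0$), $w(\Phi_r)=w(\varphi_{(r)})=0$ for $r$ near $1$. Hence $\Phi$ is bounded away from $0$ on some annulus $\mA_\rho$ with $W(\Phi)=0$, i.e. condition $(C)$ holds. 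Conversely, if $(C)$ holds for some $\rho$ near $1$, then $|\Phi_r|\ge c>0$ and $w(\Phi_r)=0$ for all $r\in(\rho,1)$; the estimate gives $|\varphi_{(r)}|\ge c/2$, and the same homotopy gives $w(\varphi_{(r)})=w(\Phi_r)=0$, near $\mT$, so Proposition~\ref{prop_Dou} shows $T_\varphi$ is Fredholm of index $-w(\varphi_{(r)})=0$. Finally, letting $r\to1$ in $|\varphi_{(r)}|\ge c/2$ and using $\varphi_{(r)}\to\varphi$ a.e. gives $|\varphi|\ge c/2$ a.e., so $\varphi$ is invertible in $L^\infty(\mT)$; since $QC$ is a $C^*$-subalgebra of $L^\infty(\mT)$ containing the constants, spectral permanence gives $\varphi\in\inv QC$. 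Thus the QA-condition holds, completing the equivalence and hence the proof.

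I expect the main obstacle to be the estimate $\|\Phi_r-\varphi_{(r)}\|_\infty\to0$: the delicate point is that radial dilation and harmonic extension agree on individual $H^\infty$ functions and their conjugates but \emph{not} on products, while the determinant is a genuine polynomial in many such products. Controlling the two successive approximations — dilated products versus products of harmonic extensions (handled by uniform boundedness and multilinearity of $\det$), and products of harmonic extensions versus harmonic extension of the product (handled by iterating Sarason's lemma within $QC$) — is the technical core; the remainder is a routine transcription through Proposition~\ref{prop_Dou} and the winding-number homotopy argument already used earlier in the paper.
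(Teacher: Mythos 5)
Your proposal is correct and follows essentially the same route as the paper: both arguments hinge on the comparison $\|\Phi_r-\varphi_{(r)}\|_\infty\to 0$ between radial dilations and harmonic extensions of $\det(G_1^*G_2)$ (the paper gets this by writing the determinant as a finite sum $\sum_i \overline{g_{1i}}\,g_{2i}$ with $g_{1i},g_{2i}\in QA$ and applying Sarason's lemma, rather than your telescoping through the polynomial expansion), and then transfer invertibility and winding numbers back and forth via Proposition~\ref{prop_Dou} and local constancy. Your explicit treatment of $\varphi\in\inv QC$ in the converse direction via spectral permanence is a small point the paper leaves implicit, but the substance of the two proofs is the same.
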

\begin{proof} Let $d_{\nu,QA} (P_1,P_2 )<1$.  Then $\varphi:=\det(G_1^* G_2) \in \inv QC$.
But then it is also invertible as an element of $H^\infty+C(\mT)$. From Douglas's result
recalled above, we have that for all $r$ sufficiently close to $1$,
$\varphi_{(r)} \in \inv C(\mT)$, they are uniformly bounded away from $0$,
and their winding numbers are all equal to the Fredholm index of $T_\varphi$.

Using Sarason's result (\cite[Lemma~6]{Sar})
recalled above, and the local constancy of winding numbers,
we will now show that for all
$r$'s close enough to $1$ the maps $\zeta\stackrel{\varphi_r}{\mapsto} \det( (G_1(r \zeta))^* G_2(r \zeta))$
($\zeta\in \mT$) are invertible as elements of  $C(\mT)$, and moreover their winding numbers are all $0$.
Indeed, we have
$$
\det((G_1|_{r\mT})^\ast G_2|_{r\mT})=\sum_{i} (g_{1i}|_{r\mT})^* (g_{2i}|_{r\mT})
$$
for suitable scalar $g_{1i}, g_{2i} \in QA$ and indices $i$. But by \cite[Part~A, Section~3.4]{NikI},
$g_{1i}|_{r\mT}=g_{1i,(r)}$ and $g_{2i}|_{r\mT}=g_{2i,(r)}$.
Also, by Sarason's result, for all $i$'s
$$
\|g_{1i,(r)}^\ast g_{2i,(r)} -(g_{1i}^\ast g_{2i})_{(r)} \|_\infty
\stackrel{r\rightarrow 1}{\longrightarrow}
0.
$$
Hence
$
\| \varphi_r-\varphi_{(r)}\|_\infty  \stackrel{r\rightarrow 1}{\longrightarrow}0$.
Since for all
$r$'s close enough to $1$, the $\varphi_{(r)}$ are uniformly bounded away from $0$, it follows
that also the $\varphi_{r}$ are uniformly bounded away from $0$. In particular, they are all elements
of $\inv C(\mT)$ for $r$'s sufficiently near $1$. Finally, by the local constancy of winding numbers,
it follows that also the winding numbers of $\varphi_r$ are all $0$ for all $r$'s close enough to $1$.

Hence  when $\det(G_1^* G_2)$ is considered as an element $F$ of $C_{\textrm{b}}({\mA_{\rho}})$, we have
that $F_r$ are invertible in $C(\mT)$ for all $r$'s close enough to $1$, and their
winding numbers are $0$. Thus the condition (C) is satisfied for all $\rho$'s close enough to
$1$.  Hence $d_{\nu}^\rho(P_1,P_2)=\|\widetilde{G}_{2} G_{1}\|_{\infty}
=d_{\nu,QA} (P_1,P_2
)$ for all $\rho$'s close enough to $1$. Consequently,
$d_{\nu}^\infty(P_1,P_2)=d_{\nu,QA}
(P_1,P_2 )$ ($<1$).

\medskip

Now suppose that $d_{\nu}^\infty(P_1,P_2)<1$. Then
$d_{\nu}^\rho(P_1,P_2)$ is a constant $<1$ for all
$\rho$'s sufficiently close to $1$.
This implies that the condition (C) is satisfied for all $\rho$'s close enough to $1$.
Hence the maps
$$
\zeta\stackrel{\varphi_r}{\mapsto} \det( (G_1(r \zeta))^* G_2(r
\zeta))\quad (\zeta\in \mT)
$$
are all elements of $\inv C(\mT)$ for all $r$'s close enough to $1$, and
moreover, their winding numbers are all equal to $0$. Owing to the
invertibility in $C_{\textrm{b}}({\mA_{\rho}})$, it follows that these
maps $\varphi_r$ are uniformly bounded away from $0$ for all $r$'s
close enough to $1$. Set $\varphi$ to be the map
$$
\zeta\stackrel{\varphi}{\mapsto} \det( (G_1( \zeta))^* G_2(
\zeta))\quad (\zeta\in \mT)
$$
From the above observations, the maps $\varphi_{(r)}$ are uniformly bounded
away from $0$ for all $r$'s sufficiently near $1$ and moreover their winding numbers are all $0$.
But then by Douglas's result recalled above (or see \cite[Corollary~4.5.11]{NikI}),
the operator $T_\varphi$ is invertible. In particular, it is
Fredholm with Fredholm index $0$.  Hence $d_{\nu,QA} (P_1,P_2 )
=\|\widetilde{G}_{2} G_{1}\|_{\infty}=d_\nu^\infty(P_1,P_2)\;(<1)$.
This completes the proof.
\end{proof}

\section{A computational example}

Consider the transfer function $P$ given by
$$
P(s):= e^{-sT} \frac{s}{s-a},
$$
where $T,a>0$. Thus $P\in \mF(H^\infty(\mC_{{\scriptscriptstyle >0}}))$, where $H^\infty(\mC_{{\scriptscriptstyle >0}})$ denotes the set of bounded and holomorphic functions
defined in the open right half plane $\mC_{{\scriptscriptstyle >0}}:=\{s\in \mC: \textrm{Re}(s)>0\}$. With the conformal map
$
\varphi:\mD \rightarrow \mC_{{\scriptscriptstyle >0}}
$ given by
$$
\varphi(z)=\frac{1+z}{1-z} \quad (z\in \mD),
$$
we can then transplant the plant to the unit disk. In this manner, we can endow $\mS(H^\infty(\mC_{{\scriptscriptstyle >0}},p,m)$ also with the $\nu$-metric. 
 As an illustration, we will calculate the $\nu$-metric between a pair of plants
arising from this $P$ when there is uncertainty in the parameter $a$ or $T$. A normalized (left=right) coprime factorization of $P$ above is given by $P=N/D$,  where
$$
N(s)= \frac{se^{-s T}}{\sqrt{2} s+a},\quad
D(s)= \frac{s -a}{\sqrt{2} s +a}.
$$

\subsection{Uncertainty in $a$}

Consider the two plants
$$
P_1:=e^{-sT} \frac{s}{s-a_1} \textrm{ and } P_2:=e^{-sT} \frac{s}{s-a_2},
$$
where $T,a_1,a_2>0$. Set $s:=\varphi(z)$ for $z \in \mA_\rho$, $\rho\in (0,1)$. Then
$$
f(s):=G_1^* G_2=
 \frac{|s|^2 e^{-2 \textrm{Re}(s)T} +(\overline{s}-a_1)(s-a_2)}{(\sqrt{2}\overline{s}+a_1)(\sqrt{2}s+a_2)}
\quad (z \in \mA_\rho).
$$
It is clear that $z\mapsto |f(\varphi(z))|$ is bounded on $ \mD$.
It can be shown that  for $|a_1-a_2|$ small enough, the real part of $f(s)$ is
nonnegative and bounded away from zero for all $s \in \mC$ such that $\textrm{Re}(s)\geq 0$, as shown below.

\begin{lemma}
Let $T,a>0$. $\mC_{{\scriptscriptstyle \geq 0}}:= \{ s\in \mC: \textrm{\em Re}(s)\geq 0\}$ and set
$$
f(s):= \frac{|s|^2 e^{-2 \textrm{\em Re}(s)T} +(\overline{s}-a)(s-a-\delta)}{(\sqrt{2}\overline{s}+a)(\sqrt{2}s+a+\delta)} \quad (s\in \mC_{{\scriptscriptstyle \geq 0}}).
$$
Then there is a $\delta_0$ small enough such that for all $0\leq \delta<\delta_0$, 
there is a $m>0$ such that $\textrm{\em Re}(f(s))>m>0$ $(s\in \mC_{{\scriptscriptstyle \geq 0}})$.
\end{lemma}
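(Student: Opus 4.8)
The plan is to treat the claim as a perturbation of the unperturbed case $\delta=0$. First I would record the algebraic simplification available there: since $a$ is real, $(\overline{s}-a)(s-a)=|s-a|^2$ and $(\sqrt2\,\overline{s}+a)(\sqrt2 s+a)=|\sqrt2 s+a|^2$, so the value of $f$ at $\delta=0$,
$$
f_0(s):=\frac{|s|^2 e^{-2\textrm{Re}(s)T}+|s-a|^2}{|\sqrt2 s+a|^2},
$$
is real-valued and nonnegative. It is in fact strictly positive on $\mC_{{\scriptscriptstyle \geq 0}}$: the numerator vanishes only if $s=0$ (killing the first term) and simultaneously $s=a$ (killing the second), which is impossible since $a>0$, while the denominator is $\geq a^2>0$ everywhere. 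Thus the problem reduces to (i) a uniform positive lower bound $m_0$ for $f_0$ on the unbounded set $\mC_{{\scriptscriptstyle \geq 0}}$, and (ii) a uniform bound $|f-f_0|\leq C\delta$.

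For (i) I would split $\mC_{{\scriptscriptstyle \geq 0}}$ into a large-modulus region and a compact one. On $\{|s|\geq 2a\}$ we have $|s-a|\geq|s|/2$, hence $|s-a|^2\geq|s|^2/4$, whereas $|\sqrt2 s+a|^2=2|s|^2+2\sqrt2\,a\,\textrm{Re}(s)+a^2\leq C'|s|^2$ there; dropping the nonnegative exponential term gives $f_0(s)\geq(|s|^2/4)/(C'|s|^2)=1/(4C')>0$, independent of $s$. On the compact set $\{|s|\leq 2a,\ \textrm{Re}(s)\geq 0\}$ the function $f_0$ is continuous and strictly positive, hence attains a positive minimum. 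Taking $m_0$ to be the smaller of the two constants settles (i).

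For (ii) I would compute the difference explicitly. Writing the numerator and denominator of $f$ as $N_0-\delta(\overline{s}-a)$ and $D_0+\delta(\sqrt2\,\overline{s}+a)$, where $N_0,D_0$ denote the numerator and denominator of $f_0$, a short calculation gives
$$
f(s)-f_0(s)=\frac{-\delta\big[(\overline{s}-a)D_0+N_0(\sqrt2\,\overline{s}+a)\big]}{D\,D_0}.
$$
Here $D=(\sqrt2\,\overline{s}+a)(\sqrt2 s+a+\delta)$. The decisive estimates are that both $|D_0|$ and $|D|$ are bounded below by $2|s|^2+a^2$ (because for $\textrm{Re}(s)\geq 0$ a factor such as $\sqrt2 s+a$ satisfies $|\sqrt2 s+a|^2=2|s|^2+2\sqrt2\,a\,\textrm{Re}(s)+a^2\geq 2|s|^2+a^2$), and that $0\leq e^{-2\textrm{Re}(s)T}\leq 1$ makes $N_0$ at most quadratic in $|s|$. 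Consequently the bracketed numerator is cubic in $|s|$ while $|D\,D_0|\geq(2|s|^2+a^2)^2$ is quartic, so the quotient (with the factor $\delta$ removed) is a continuous function of $|s|$ that tends to $0$ at infinity and is therefore bounded by a constant $C$, uniformly on $\mC_{{\scriptscriptstyle \geq 0}}$ and uniformly for $0\leq\delta\leq 1$. This gives $|f(s)-f_0(s)|\leq C\delta$, and since $f_0$ is real, $\textrm{Re}(f(s))\geq f_0(s)-|f(s)-f_0(s)|\geq m_0-C\delta$. Choosing $\delta_0=\min\{1,m_0/(2C)\}$ and $m=m_0/4$ then yields $\textrm{Re}(f(s))\geq m_0/2>m$ for all $s\in\mC_{{\scriptscriptstyle \geq 0}}$ and all $0\leq\delta<\delta_0$.

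The main obstacle is the uniformity over the unbounded region $\mC_{{\scriptscriptstyle \geq 0}}$: pointwise positivity of $\textrm{Re}(f)$ is immediate, but both the lower bound on $f_0$ and the smallness of the perturbation must be made independent of $s$. The degree count in (ii)---a cubic numerator against a denominator bounded below by $(2|s|^2+a^2)^2$, hence by a genuine multiple of $|s|^4$ at infinity and by $a^4$ near the origin---is exactly what produces this uniformity, and the harmless bound $e^{-2\textrm{Re}(s)T}\leq 1$ keeps the exponential from disturbing the polynomial estimates.
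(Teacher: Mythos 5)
Your proof is correct. It shares the paper's overall strategy --- treat the statement as a perturbation of the case $\delta=0$, handle large $|s|$ by asymptotics and bounded $s$ by compactness --- but the way you control the perturbation is genuinely different. The paper never forms a global estimate $|f-f_0|\leq C\delta$; instead it treats the two regions separately: for $|s|>R$ it factors the non-exponential part as $\frac{s-a}{\sqrt{2}s+a}\cdot\frac{s-a-\delta}{\sqrt{2}s+a+\delta}$, shows each factor is within $\epsilon$ of $1/\sqrt{2}$ (the first by the limit at infinity, the second by an explicit bound $\frac{1+\sqrt{2}}{2R}\delta$ on its distance from the first), and deduces $\mathrm{Re}(f)>\tfrac14$ there after checking the exponential term has nonnegative real part; on the compact set it invokes uniform continuity of $(s,\delta)\mapsto\mathrm{Re}(f(s))$ on $K\times[0,1]$. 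Your route instead establishes once and for all a uniform lower bound $m_0$ for the real-valued function $f_0$ on all of $\mC_{\scriptscriptstyle\geq 0}$ (via $|s-a|\geq|s|/2$ for $|s|\geq 2a$ and positivity on the compact remainder), and then a single global Lipschitz-in-$\delta$ bound $|f-f_0|\leq C\delta$ by writing out the difference explicitly and counting degrees against the denominator bound $|DD_0|\geq(2|s|^2+a^2)^2$. Your version is more unified and yields a single $m$ valid for all admissible $\delta$; the paper's version is less algebraic and produces the concrete constant $\tfrac14$ in the unbounded region, at the cost of the slightly fiddly $\epsilon$ bookkeeping in $\frac{2\epsilon}{\sqrt{2}}+\epsilon^2<\frac14$. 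All the individual estimates you use (the lower bounds on $|D_0|$ and $|D|$, the cubic-versus-quartic degree count, and the strict positivity of $f_0$) check out.
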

\begin{proof}
Choose $\epsilon>0$ such that $
 \displaystyle\frac{2 \epsilon}{\sqrt{2}}+\epsilon^2<\frac{1}{4}$. We have 
$\displaystyle 
\lim_{{\scriptscriptstyle \begin{subarray}{l}|s|\rightarrow \infty\\ s\in \mC_{{\scriptscriptstyle \geq 0}}\end{subarray}}}
\frac{s-a}{\sqrt{2}s +a}=\frac{1}{\sqrt{2}}$. So we can choose a $R>0$ such that
$
\left| \displaystyle \frac{s-a}{\sqrt{2}s +a}-\frac{1}{\sqrt{2}}\right|<\displaystyle \frac{\epsilon}{2}.
$
We have
$$
\left|\frac{s-a}{\sqrt{2}s +a}-\frac{s-a-\delta}{\sqrt{2}s +a+\delta}\right|
=(1+\sqrt{2})|\delta| \frac{|s|}{|\sqrt{2} s+a|} \frac{1}{|\sqrt{2} s+a +\delta|}.
$$
It is easily seen that for all $s\in \mC_{{\scriptscriptstyle \geq 0}} $,
$
 \displaystyle\frac{|s|}{|\sqrt{2} s+a|}\leq \frac{1}{\sqrt{2}}$, 
and if $|s|\geq R$, then 
$$
\frac{1}{|\sqrt{2} s+a +\delta|}\leq \frac{1}{\sqrt{2} R}.
$$
So we have that $
\left| \displaystyle\frac{s-a}{\sqrt{2}s +a}-\frac{s-a-\delta}{\sqrt{2}s +a+\delta}\right|
\leq \displaystyle \frac{1+\sqrt{2}}{2 R}\cdot \delta.$ Choose $\delta_0$ so that
$$
\frac{1+\sqrt{2}}{2 R}\cdot \delta< \frac{\epsilon}{2}
$$
for all $0\leq \delta<\delta_0$.    
Thus whenever $|s|>R$, we have for all such $\delta$ that 
$$ 
\left|\frac{s-a-\delta}{\sqrt{2}s +a+\delta}-\frac{1}{\sqrt{2}} \right|<\epsilon.
$$
Hence $
\left| \displaystyle \frac{s-a}{\sqrt{2}s +a}\cdot \frac{s-a-\delta}{\sqrt{2}s +a+\delta}-\frac{1}{2} \right|<\displaystyle\frac{2 \epsilon}{\sqrt{2}}+\epsilon^2<\frac{1}{4}.
$
Thus 
$$
\frac{1}{2} -\textrm{Re}\left( \frac{s-a}{\sqrt{2}s +a}\cdot \frac{s-a-\delta}{\sqrt{2}s +a+\delta} \right)
\leq 
\left|\frac{s-a}{\sqrt{2}s +a}\cdot \frac{s-a-\delta}{\sqrt{2}s +a+\delta}-\frac{1}{2} \right|< \frac{1}{4},
$$
and so  $
\textrm{Re}\left( \displaystyle\frac{s-a}{\sqrt{2}s +a}\cdot \frac{s-a-\delta}{\sqrt{2}s +a+\delta} \right)>\displaystyle\frac{1}{4}.
$ But clearly for $s\in \mC_{{\scriptscriptstyle \geq 0}}$,
$$
\textrm{Re}\left(\frac{|s|^2 e^{-2 \textrm{Re}(s)T}}{(\sqrt{2}\overline{s}+a)(\sqrt{2}\overline{s}+a+\delta)}\right)
\geq 0.
$$
Hence $\textrm{Re}(f(s))\geq \displaystyle\frac{1}{4}$ for $|s|>R$ and $0\leq \delta<\delta_0$. 

Set $K:=\{s\in \mC: |s|\leq R\}\bigcap \mC_{{\scriptscriptstyle \geq 0}}$. Define $F:K\times [0,1]\rightarrow \mR$ by 
$$
F(s,\delta)=\textrm{Re}\left( \frac{|s|^2 e^{-2 \textrm{Re}(s)T} +(\overline{s}-a)(s-a-\delta)}{(\sqrt{2}\overline{s}+a)(\sqrt{2}s+a+\delta)} \right)
\quad  (s\in K, \;\delta\in [0,1]).
$$
Then $
F(s,0)=\textrm{Re}\left(\displaystyle \frac{|s|^2 e^{-2 \textrm{Re}(s)T}+|s-a|^2}{|\sqrt{2}\overline{s}+a|^2}\right)\geq 0.$
Set $
2m:=\displaystyle\min_{s\in K} F(s,0)$. 

\noindent Clearly $m\geq 0$. In fact, $m>0$ since if $F(s_0,0)=2m=0$ for some $s_0\in K$, then we would have 
$|s_0-a|^2=0$, and so $s_0=a$, but then 
$$
2m=|s|^2 e^{-2 \textrm{Re}(s)T}|_{s=s_0=a}\neq 0,
$$
a contradiction. As $F$ is continuous on the compact set $K\times [0,1]$, it is uniformly continuous there. 
Refine the choice of $\delta_0$ if necessary so that  $0\leq \delta<\delta_0$ implies that 
 $
|F(s,\delta)-F(s,0)|<m $ 
for all $s\in K$. Hence we have that $F(s,\delta)=\textrm{Re}(f(s))>m$ for all $0\leq \delta<\delta_0 $ and $s\in K$. 
This completes the proof.  
\end{proof}

In light of the above result, $G_1^* G_2 \in \inv C_{\textrm b}({\mA_{\rho}})$ for $\rho$ close enough to $1$, and $W(G_1^* G_2)=0$.
We also have
$$
\widetilde{G}_2 G_1=\frac{s e^{-sT}(a_2-a_1)}{(\sqrt{2}s+a_1)(\sqrt{2}s+a_2)},
$$
where $s:=\varphi(z)$, $z\in \mT\setminus \{1\}$.
Consequently, using the Cauchy-Schwarz (in)equality, we obtain
$$
\|\widetilde{G}_2 G_1\|_\infty=\frac{|a_2-a_1|}{2} \sup_{\omega\geq 0}
\frac{\omega}{\sqrt{\omega^2+\displaystyle\frac{a_1^2}{2}}\sqrt{\omega^2+\displaystyle\frac{a_2^2}{2}}}=
\frac{|a_2-a_1|}{2}\frac{\sqrt{2}}{a_1+a_2}.
$$
Hence
$$
d_\nu^\infty (P_1,P_2)= \frac{|a_1-a_2|}{\sqrt{2}(a_1+a_2)}
$$
whenever $|a_1-a_2|$ is small enough.

\subsection{Uncertainty in $T$}

Consider the two plants
$$
P_1:=e^{-sT_1} \frac{s}{s-a} \textrm{ and } P_2:=e^{-sT_2} \frac{s}{s-a},
$$
where $T_1,T_2,a>0$. We will show that $\|\widetilde{G}_2 G_1\|_\infty=1$, and 
so irrespective of whether or not the condition (C) is satisfied for some $\rho$, the $\nu$-metric between the plants will be always $1$. 

We have
$$
\widetilde{G}_2 G_1=\frac{s(s-1)(e^{-sT_2}-e^{-sT_1})}{2\left(s+\displaystyle \frac{1}{\sqrt{2}}\right)^2},
$$
where $s:=\varphi(z)$, $z\in \mT\setminus \{1\}$. Thus
$$
\|\widetilde{G}_2 G_1\|_\infty= \sup_{\omega\geq 0} \frac{\omega \sqrt{\omega^2+1}}{2 \sqrt{\omega^2+ \displaystyle \frac{1}{2}}}\sqrt{2} \sqrt{1-\cos (\omega(T_2-T_1))}.
$$
By the Arithmetic Mean-Geometric Mean inequality, we have for $\omega\geq 0$ that
$$
\omega^2 (\omega^2+1) \leq \left( \frac{\omega^2 +(\omega^2+1)}{2} \right)^2=\left(\omega^2+\frac{1}{2}\right)^2.
$$
We have
$$
\sup_{\omega\geq 0} \frac{\omega \sqrt{\omega^2+1}}{ \sqrt{\omega^2+ \displaystyle \frac{1}{2}}}=1=
\lim_{\omega \rightarrow \infty} \frac{\omega \sqrt{\omega^2+1}}{ \sqrt{\omega^2+ \displaystyle \frac{1}{2}}}.
$$
Also with
$$
\omega:= \frac{(2n+1) \pi}{T_2-T_1} \quad (n \in \mN)
$$
we have that $\omega \rightarrow \infty$, and $\cos (\omega (T_2-T_1))=-1$. Thus $
\|\widetilde{G}_2 G_1\|_\infty=1$, and so 
$$
d_{\nu}^\infty(P_1,P_2)=1.
$$

\medskip

\noindent {\bf Acknowledgements:} The author thanks
\begin{enumerate}
\item Reviewer 1 for creating a very  significant improvement
with the suggestion of  replacing an earlier choice of a Banach algebra $S$ by
 a smoother subalgebra of it, which greatly simplified the proofs;
\item  Joseph Ball for  the advice on comparing the $\nu$-metric
defined in this paper with the one from \cite{BalSas2},
and
for the suggestion of replacing an earlier sequential limit in
\eqref{eq_defn_d_nu_infty} with the continuous one now adopted;
\item  Kalle Mikkola for the
reference \cite{Mik} in Remark~\ref{remark_stabilizable_is_S};
\item Reviewer 2 for the suggestion of including an example illustrating the computation of the $\nu$-metric.
\end{enumerate}


\begin{thebibliography}{10}

\bibitem{BalSas}
J.A. Ball and A.J. Sasane.
Extension of the $\nu$-metric.
{\em Complex Analysis and Operator Theory}, to appear.

\bibitem{BalSas2}
J.A. Ball and A.J. Sasane.
Extension of the $\nu$-metric: the $H^\infty$ case.
In {\em Spectral Theory, Mathematical System Theory, Evolution Equations
Differential and Difference Equations},
(Ed. W. Arendt, J.A. Ball, K.T. Forster, V. Mehrmann, and C. Trunk),
Birkhauser OT volume, to appear.

\bibitem{Die}
T. tom Dieck.
{\em Algebraic topology.}
EMS Textbooks in Mathematics, European Mathematical Society (EMS),
Z\"urich, 2008.

\bibitem{Dou}
R.G. Douglas.
{\em Banach algebra techniques in operator theory}.
Second edition. Graduate Texts in Mathematics, 179,
Springer-Verlag, New York, 1998.

\bibitem{Gar}
J.B. Garnett.
{\em Bounded analytic functions}.
Revised first edition. Graduate Texts in Mathematics, 236.
Springer, New York, 2007.

\bibitem{Ino}
Y. Inouye.
Parametrization of compensators for linear systems with
transfer functions of bounded type.
{\em Technical Report 88-01}, Faculty of Eng. Sci.,
Osaka University, Osaka, Japan, March 1988.

\bibitem{Mik}
K.M. Mikkola.
Weakly coprime factorization and state-feedback stabilization of
discrete-time systems.
{\em Mathematics of Control, Signals, and Systems},
no. 4, 20:321-350, 2008.

\bibitem{Nik}
N.K. Nikolski.
{\em Treatise on the shift operator.
Spectral function theory. With an appendix by
S.V. Khrushch\"ev and V.V. Peller.}
Translated from the Russian by Jaak Peetre.
Grundlehren der Mathematischen Wissenschaften,
273, Springer-Verlag, Berlin, 1986.

\bibitem{NikI}
N.K. Nikolski.
{\em Operators, functions, and systems: an easy reading. Vol. 1.
Hardy, Hankel, and Toeplitz.}
Translated from the French by Andreas Hartmann.
Mathematical Surveys and Monographs, 92,
American Mathematical Society, Providence, RI, 2002.

\bibitem{Rud}
W. Rudin.
\newblock {\em Functional analysis}.
\newblock 2nd Edition, McGraw Hill, 1991.

\bibitem{Sar}
D. Sarason.
Toeplitz operators with piecewise quasicontinuous symbols.
{\em Indiana University Mathematics Journal}, 26:817-838, no. 5, 1977.

\bibitem{Smi}
M.C. Smith.
On stabilization and the existence of coprime factorizations.
{\em IEEE Transactions on Automatic Control},
no. 9, 34:1005-1007, 1989.

\bibitem{Vin}
G. Vinnicombe.
Frequency domain uncertainty and the graph topology.
{\em IEEE Transactions on Automatic Control},
no. 9, 38:1371-1383, 1993.

\end{thebibliography}
\end{document}